\documentclass[reqno]{amsart}

\usepackage{tabu}
\usepackage{amssymb}
\usepackage{cancel}
\usepackage{mathtools}
\usepackage{a4wide,amsmath}
\usepackage{mathrsfs}
\usepackage{amsthm}
\usepackage{dsfont}
\numberwithin{equation}{section}
\numberwithin{figure}{section}
\numberwithin{table}{section}
\usepackage{bbm}
\usepackage{subfig}
\usepackage{enumerate}
\usepackage[section]{placeins}
\usepackage{graphicx}		  
\usepackage{ifpdf}
\usepackage{comment}
\ifpdf
\DeclareGraphicsExtensions{.pdf,.eps,.jpg,.png}	
\usepackage[suffix=]{epstopdf}
\fi
\usepackage{xcolor}
\usepackage[utf8]{inputenc}
\usepackage{hyperref}
\hypersetup{hidelinks}

\mathtoolsset{showonlyrefs}

\long\def\MSC#1\EndMSC{\def\arg{#1}\ifx\arg\empty\relax\else
	{\narrower\noindent%
		{2020 Mathematics Subject Classification}: #1\\} \fi}
\long\def\PACS#1\EndPACS{\def\arg{#1}\ifx\arg\empty\relax\else
	{\narrower\noindent%
		{PACS numbers}: #1}\fi}
\long\def\KEY#1\EndKEY{\def\arg{#1}\ifx\arg\empty\relax\else
	{\narrower\noindent%
		Keywords: #1\\}\fi}

\usepackage{listings}
\lstnewenvironment{mat}
{\lstset{language=mathematica,mathescape,columns=flexible}}
{}


\newcommand{\XXint}[3]{{\setbox0=\hbox{$#1{#2#3}{\int}$}
		\vcenter{\hbox{$#2#3$}}\kern-.5\wd0}}


%
%
\theoremstyle{plain}
\newtheorem{theorem}{Theorem}[section]
\newtheorem{lemma}[theorem]{Lemma}
\newtheorem{proposition}[theorem]{Proposition}
\newtheorem{corollary}[theorem]{Corollary}
\theoremstyle{definition}

\theoremstyle{remark}
\newtheorem{remark}[theorem]{Remark}


\newcommand{\abs}[1]{{\lvert#1\rvert}} 
\newcommand{\inner}[1]{\langle#1\rangle}

\newcommand{\essinf}{\mathop{\textup{ess\,inf}}}

\newcommand{\vek}[1]{\mathbf{#1}}

\newcommand{\I}{\mathrm{i}}    
\newcommand{\e}{\mathrm{e}}    
\newcommand{\di}{\mathrm{d}}   

\newcommand{\R}{\mathbb{R}}
\newcommand{\N}{\mathbb{N}}
\newcommand{\C}{\mathbb{C}}
\newcommand{\Z}{\mathbb{Z}}

\begin{document}
	\title[On the linearized forward map of EIT]{Continuity of the linearized forward map of Electrical Impedance Tomography from square-integrable perturbations to Hilbert--Schmidt operators}
    
\author[J.~Bisch]{Joanna Bisch}
\address[J.~Bisch]{Department of Mathematics and Systems Analysis, Aalto University, P.O. Box~11100, 00076 Helsinki, Finland.}
\email{joanna.bisch@aalto.fi}

\author[M.~Hirvensalo]{Markus Hirvensalo}
\address[M.~Hirvensalo]{Department of Mathematics and Systems Analysis, Aalto University, P.O. Box~11100, 00076 Helsinki, Finland.}
\email{markus.hirvensalo@aalto.fi}

\author[N.~Hyv\"onen]{Nuutti Hyv\"onen}
\address[N.~Hyv\"onen]{Department of Mathematics and Systems Analysis, Aalto University, P.O. Box~11100, 00076 Helsinki, Finland.}
\email{nuutti.hyvonen@aalto.fi}
	
\begin{abstract}
This work considers the Fr\'echet derivative of the idealized forward map of two-dimensional electrical impedance tomography,~i.e.,~the linear operator that maps a perturbation of the coefficient in the conductivity equation over a bounded two-dimensional domain to the linear approximation of the corresponding change in the Neumann-to-Dirichlet boundary map. It is proved that the Fr\'echet derivative is bounded from the space of square-integrable conductivity perturbations to the space of Hilbert--Schmidt operators on the mean-free $L^2$ functions on the domain boundary, if the background conductivity coefficient is constant and the considered simply-connected domain has a $C^{1,\alpha}$ boundary. This result provides a theoretical framework for analyzing linearization-based one-step  reconstruction algorithms of electrical impedance tomography in an infinite-dimensional setting.
\end{abstract}	
\maketitle
	
\KEY
electrical impedance tomography,
conductivity equation,
linearization,
Hilbert--Schmidt operators,
Calder\'on problem, 
Zernike polynomials.
\EndKEY
\MSC
35B30, 35B35, 35Q60, 35R30.
\EndMSC

\section{Introduction}
The goal of {\em electrical impedance tomography} (EIT) is to reconstruct the internal conductivity of an examined body $\Omega$ from boundary measurements of current and voltage. According to the idealized {\em continuum model} (CM), the boundary data attainable by EIT is the {\em Neumann-to-Dirichlet} (ND), or the {\em Dirichlet-to-Neumann} (DN), boundary map for the conductivity equation
$$
-\nabla \cdot (\gamma \nabla u) = 0 \qquad \text{in } \Omega,
$$
where the positive coefficient $\gamma \in L^\infty(\Omega)$ is the to-be-reconstructed conductivity in $\Omega$. Although the DN map is preferred in many theoretical works on EIT, we resort here to the ND map due to its more favorable numerical properties. For more information on EIT, including the unique identifiability of $\gamma$ from boundary measurements, (in)stability estimates and basics of reconstruction algorithms, we refer to the review papers~\cite{Borcea02, Cheney99, Uhlmann09} and the references therein.

This work is motivated by the simplest approach to reconstructing useful information on the conductivity from boundary measurements modeled by the CM: linearizing the forward map that sends $\gamma$ to the ND operator at some constant conductivity level and solving the resulting linearized inverse problem via,~e.g.,~regularization \cite{Engl96} or Bayesian inversion \cite{Kaipio04a,Stuart10}. It is well-known that the forward map of EIT is Fr\'echet differentiable, with the standard version of the derivative mapping $L^\infty(\Omega)$ to the space of bounded linear operators on, say, the mean-free subspace $L^2_\diamond(\partial \Omega)$ of $L^2(\partial \Omega)$. In particular, the natural domain and image spaces for the Fr\'echet derivative are not Hilbert spaces, and their duals are also rather unpleasant objects from the standpoint of numerical algorithms. This complicates the theory for solving the linearized inverse problem of EIT since both regularization and Bayesian techniques work most naturally for Hilbert spaces, and they often explicitly utilize the adjoint operator of the linear(ized) forward map.

Despite the aforementioned theoretical complications in the infinite-dimensional setting, algorithms based on one-step linearization have been successfully applied to solving the discretized reconstruction problem of EIT in practice; see,~e.g.,~\cite{Adler09, Cheney90} for such approaches in the context of realistic electrode measurements. A computational framework can be introduced, e.g., as follows: After discretizing the conductivity and choosing a finite-dimensional $L^2(\partial \Omega)$-orthonormal  basis for the boundary measurements, one can introduce a finite-dimensional version of the Fr\'echet derivative that maps the discretized conductivity perturbation to a truncated matrix representation in the chosen basis for the corresponding change in the ND map. This can be achieved,~e.g.,~with the help of some finite element method. However, the connection between such a finite-dimensional computational setting and the infinite-dimensional linearization of the CM is nonobvious: If the boundary data matrix is vectorized and the discrete Fr\'echet derivative is interpreted as a linear mapping between Euclidean spaces, one would expect that the domain and image spaces encountered at the discretization limit,~i.e.,~when the conductivity discretization gets infinitely fine and the dimension of the boundary current basis approaches infinity, are not $L^\infty(\Omega)$ and the bounded linear maps on $L^2_\diamond(\partial \Omega)$, but rather (weighted) $L^2(\Omega)$ and the space of Hilbert--Schmidt operators on $L^2_\diamond(\partial \Omega)$. 

The main result of this work is that the Fr\'echet derivative of the idealized forward map of EIT evaluated at a constant conductivity is compatible with the heuristic discretization limit considered above, that is, it maps $L^2(\Omega)$ continuously to the space of Hilbert--Schmidt operators on $L^2_\diamond(\partial \Omega)$, if $\Omega$ is a bounded simply-connected two-dimensional $C^{1, \alpha}$ domain. This provides an infinite-dimensional Hilbert space framework for further analysis of linearization-based one-step reconstruction algorithms for EIT.

The fact that the Fr\'echet derivative for the forward map of the CM evaluated at any positive Lipschitz conductivity extends in two dimensions to a bounded map from $L^2(\Omega)$ to the space of bounded linear operators on $L^2_\diamond(\partial \Omega)$ was established in \cite{2d_paperi}. However, the question on whether the boundedness of the derivative is retained on $L^2(\Omega)$ when switching on the image side to the Hilbert--Schmidt topology was left open by \cite{2d_paperi}, although it did prove such a result for certain infinite-dimensional subspaces of $L^2(\Omega)$. On the other hand, it seems to be common knowledge (cf.,~e.g.,~\cite{Hanke03}) that the ND map is a Hilbert--Schmidt operator for positive $L^\infty(\Omega)$ conductivities in regular enough two-dimensional domains; see \cite[Appendix~A]{Garde2022} for a formal proof of this result.

This text is organized as follows. Section~\ref{sec:setting} introduces the problem setting and states our main theorem and a corollary that considers the possibility to numerically approximate the Fr\'echet derivative. The proof of the main theorem for the case that $\Omega$ is the unit disk is divided over Sections~\ref{sec:F-matrices}--\ref{sec:schur}. Finally, Section~\ref{sec:generalization} extends the argumentation for more general two-dimensional simply-connected domains.

\subsection{On the notation}
The space of bounded linear operators between Banach spaces $X$ and $Y$ is denoted by $\mathscr{L}(X, Y)$, with the shorthand notation $\mathscr{L}(X) =  \mathscr{L}(X, X)$. Analogously, the space of Hilbert--Schmidt operators between Hilbert spaces $H_1$ and $H_2$ is denoted by $\mathscr{L}_{\rm HS}(H_1, H_2)$. For more information on Hilbert--Schmidt operators, consult,~e.g.,~\cite{Weidmann1980}. 

\section{Problem setting and main results}
\label{sec:setting}
    Let \( \Omega \subset \R^2\) be a bounded Lipschitz domain whose conductivity is characterized by a real-valued function \( \gamma \in L^\infty(\Omega; \R) \),\footnote{Unless explicitly indicated, all functions spaces in this work have $\C$ as the multiplier field.} with \( \essinf \gamma > 0 \). Denote by $\langle \, \cdot \,, \, \cdot \, \rangle$ the $L^2(\partial \Omega)$ inner product and consider a mean-free boundary current density
    \begin{equation*}
        f \in L^2_\diamond(\partial \Omega) = \big \{ g\in L^2(\partial \Omega) \mid \inner{g,1} = 0 \big \}.
    \end{equation*}
    The electromagnetic potential induced by $f$ weakly satisfies the elliptic problem
    \begin{equation} \label{eq:strong_form}
        -\nabla\cdot(\gamma\nabla u) = 0 \text{ in } \Omega, \qquad \nu\cdot(\gamma\nabla u) = f \text{ on } \partial \Omega,
    \end{equation}
    where \( \nu \) is the exterior unit normal of $\Omega$ and $ \cdot  $ denotes the real dot product.
    The variational formulation of \eqref{eq:strong_form} is to find \( u \) such that
    \begin{equation} \label{eq:weak_form}
        \int_\Omega \gamma \nabla u \cdot \overline{\nabla v} \, \di V = \langle f, v|_{\partial \Omega} \rangle \qquad \forall v \in H^1(\Omega).
    \end{equation}
    With the help of the Lax--Milgram lemma, it straightforwardly follows that there exists a unique solution \( u^\gamma_f \) for \eqref{eq:weak_form} in the Sobolev space
    \begin{equation*}
        H^1_\diamond(\Omega) =  \big\{ w \in H^1(\Omega) \; | \; w|_{\partial \Omega} \in L^2_\diamond(\partial \Omega) \big \}. 
    \end{equation*}
    The dependence between the boundary current density and the boundary potential in \eqref{eq:weak_form} can be described by the linear ND boundary map
    \begin{equation*}
        \Lambda(\gamma): \left\{
        \begin{array}{l}
            f \mapsto  u^\gamma_f |_{\partial \Omega}, \\[2mm]
            L^2_\diamond(\partial \Omega) \to L^2_\diamond(\partial \Omega),
        \end{array}
        \right.
    \end{equation*}
    which is a standard (idealized) input for the inverse problem of determining $\gamma$ from boundary measurements of current and voltage. In two spatial dimensions, the ND map is a Hilbert--Schmidt operator if $\partial \Omega$ is of the class $C^{1, \alpha}$ for some $\alpha > 0$ \cite[Theorem~A.2]{Garde2022}, which is the regularity we assume for $\partial \Omega$ in the following (except for some parts of Section~\ref{sec:generalization}).
    
    It is well-known that the nonlinear {\em forward map}
    \begin{equation}
        \gamma \mapsto \Lambda(\gamma)
    \end{equation}
    is Fr\'echet differentiable with respect to complex-valued perturbations \( \eta \in L^\infty(\Omega) \); see,~e.g.,~\cite{Lechleiter08,Garde2020}. Denote the Fr\'echet derivative of \( \Lambda \) at \( \gamma = 1 \) by \( F = D \Lambda(1) \in \mathscr{L}(L^\infty(\Omega),\mathscr{L}_{\text{HS}}(L^2_\diamond(\partial \Omega))) \) and note that it is uniquely characterized by the identity
    \begin{equation} \label{eq:frechet_redef}
       \big\langle (F \eta) f, g \big\rangle = -\int_\Omega \eta \nabla u_f^1 \cdot \overline{\nabla u_g^1} \,\di V
    \end{equation}
    for all \( \eta \in L^\infty(\Omega) \) and \( f, g \in L^2_\diamond(\partial \Omega) \).
    
 Using \eqref{eq:frechet_redef} as the definition of $F$ and exploiting elliptic regularity theory, $F$ can be extended to an element of $\mathscr{L}(L^2(\Omega),\mathscr{L}(L^2_\diamond(\partial \Omega)))$, i.e., to be bounded from the space of square-integrable conductivity perturbations to the space of bounded linear boundary maps on $L^2_\diamond(\partial \Omega)$ \cite[Proposition~1.1]{2d_paperi}. However, the analysis in \cite{2d_paperi} does not reveal weather $F\eta$ remains a Hilbert--Schmidt operator for a general $\eta \in L^2(\Omega)$ (cf.~\cite[Theorem~1.4]{2d_paperi}), which is the question settled by our main result:  

    \begin{theorem} \label{thm:main}
        Let \( \Omega \subset \R^2\) be a bounded simply-connected \( C^{1,\alpha} \) domain for some \( \alpha \in (0, 1) \).
        Then, the linear map
        \[ 
        F : L^2(\Omega) \longrightarrow \mathscr{L}_{\rm HS}\big( L^2_\diamond(\partial \Omega) \big) 
        \]
        is continuous.
    \end{theorem}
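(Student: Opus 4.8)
The plan is to prove the estimate first when $\Omega$ is the unit disk $\ball$, where the constant background makes everything explicit, and then to transfer it to a general simply-connected $C^{1,\alpha}$ domain by a conformal change of variables. On $\ball$ I would work in the orthonormal basis $f_n(\theta)=(2\pi)^{-1/2}\e^{\I n\theta}$, $n\in\Z\setminus\{0\}$, of $L^2_\diamond(\sphe)$. Since the potentials are harmonic, the Neumann problem with datum $f_n$ is solved by $u_n=(2\pi)^{-1/2}|n|^{-1}r^{|n|}\e^{\I n\theta}$, so that $\nabla u_n=(2\pi)^{-1/2}r^{|n|-1}\e^{\I n\theta}(\hat r+\I\,\sgn(n)\,\hat\theta)$ in polar coordinates. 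Inserting this into \eqref{eq:frechet_redef} and using that the real dot product gives $(\hat r+\I\sgn(n)\hat\theta)\cdot(\hat r-\I\sgn(m)\hat\theta)=1+\sgn(n)\sgn(m)$, I would find that the matrix entries vanish unless $\sgn(n)=\sgn(m)$, and otherwise
\begin{equation*}
\big\langle(F\eta)f_n,f_m\big\rangle=-2\int_0^1\eta_{m-n}(r)\,r^{|n|+|m|-1}\,\di r,
\end{equation*}
where $\eta_k(r)$ is the $k$th angular Fourier coefficient of $\eta$. Thus $F\eta$ is block diagonal with respect to the splitting of $L^2_\diamond(\sphe)$ into positive and negative frequencies, and each entry is a single radial moment of one Fourier coefficient of $\eta$.

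The analytic core is then to sum these entries. Writing $\|F\eta\|_{\mathrm{HS}}^2$ as a double sum and grouping by the angular frequency $k=m-n$, the positive block contributes
\begin{equation*}
4\sum_{k\in\Z}\ \sum_{\substack{n\ge1,\ n+k\ge1}}\Big|\int_0^1\eta_k(r)\,r^{2n+k-2}\,r\,\di r\Big|^2 .
\end{equation*}
For fixed $k$ the inner sum is $\sum_n|\langle\eta_k,\phi^{(k)}_n\rangle|^2$ in $L^2((0,1),r\,\di r)$ with $\phi^{(k)}_n(r)=r^{2n+k-2}$, hence bounded by $\|G^{(k)}\|$ times $\int_0^1|\eta_k|^2\,r\,\di r$, where $G^{(k)}$ is the Gram matrix with entries $(2(n+m+k-1))^{-1}$. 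This is a shifted Hilbert matrix: after reindexing the $k<0$ case its nonnegative entries are dominated by those of the classical Hilbert matrix, whose $\ell^2$ norm is $\pi$, so a Schur test bounds $\|G^{(k)}\|$ uniformly in $k$. Summing over $k$ by Parseval and adding the symmetric negative block yields $\|F\eta\|_{\mathrm{HS}}\le C\|\eta\|_{L^2(\ball)}$.

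For a general simply-connected $C^{1,\alpha}$ domain I would exploit the conformal invariance of the two-dimensional Dirichlet form: if $\Phi\colon\ball\to\Omega$ is a Riemann map, then $\int_\Omega\eta\,\nabla u_f\cdot\overline{\nabla u_g}\,\di V=\int_\ball(\eta\circ\Phi)\,\nabla(u_f\circ\Phi)\cdot\overline{\nabla(u_g\circ\Phi)}\,\di V$, since the area Jacobian cancels the gradient factor. By the Kellogg--Warschawski theorem the hypothesis $\partial\Omega\in C^{1,\alpha}$ guarantees that $\Phi$ extends to the closures with $\Phi'$ continuous and bounded away from $0$ and $\infty$; hence $\eta\mapsto\eta\circ\Phi$ is bounded with bounded inverse on $L^2$, and the induced change of boundary variables defines bounded, boundedly invertible operators between $L^2_\diamond(\partial\Omega)$ and $L^2_\diamond(\sphe)$. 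Pre- and post-composing $F_\ball(\eta\circ\Phi)$ with these boundary operators recovers $F_\Omega\eta$, and since the Hilbert--Schmidt class is a two-sided ideal the Hilbert--Schmidt norm changes only by a multiplicative constant; the disk estimate therefore transfers to $\Omega$.

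I expect the main obstacle to be the uniform-in-$k$ control of the Hilbert-type matrices $G^{(k)}$, equivalently a Hardy--Hilbert inequality for the family of radial moment operators, which is the heart of the disk case and the reason a dedicated Schur-test argument is warranted. A secondary but genuine technical point is verifying that the conformal pull-back really reduces the operator on $\Omega$ to a bounded two-sided modification of the one on $\ball$, where the $C^{1,\alpha}$ regularity is essential for keeping the conformal factor and the induced boundary weights bounded above and below.
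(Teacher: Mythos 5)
Your argument is correct, but the core of it — the disk estimate — follows a genuinely different route from the paper. The paper works with the Zernike polynomial basis of $L^2(D)$ and an explicit closed-form expression (imported from earlier work) for the coefficients $\langle (F\psi_{j,k})f_m,f_n\rangle$; it reduces the claim to the uniform boundedness on $\ell^2(\N)$ of a family of lower-triangular matrices $F^{|j|}$ indexed by the angular frequency, bounds their entries by replacing a ratio of gamma functions with an exponential via a digamma/Gr\"onwall argument, and then runs a two-weight Schur test with weights $l^{-1/2}$ and $(l+|j|)^{-1/2}$. You instead compute the harmonic basis solutions directly, observe that each matrix entry is a single radial moment $-2\int_0^1\eta_{m-n}(r)\,r^{|m|+|n|-1}\,\di r$ of one angular Fourier coefficient of $\eta$, and control the resulting quadratic form by the Bessel-type bound $\sum_n|\langle \eta_k,\phi_n^{(k)}\rangle|^2\le\|G^{(k)}\|\,\|\eta_k\|^2$, where the Gram matrices $G^{(k)}$ of the monomials $r^{2n+k-2}$ in $L^2((0,1),r\,\di r)$ are entrywise dominated, uniformly in $k$, by half the classical Hilbert matrix. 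This is more elementary and self-contained (no Zernike polynomials, no gamma-function asymptotics, no Gr\"onwall), it isolates the "Hardy--Hilbert" mechanism cleanly, and it yields a sharper constant: $\|F\eta\|_{\mathscr{L}_{\rm HS}}\le\sqrt{2}\,\|\eta\|_{L^2(D)}$ on the disk versus the paper's $16\|\eta\|_{L^2(D)}$. What the paper's route buys is the uniform operator-norm bound on the matrices $F^{|j|}$ themselves, which tie into the companion results (e.g., the lower bounds used to rule out compactness) formulated in that same Zernike framework. Your conformal transfer to general $C^{1,\alpha}$ domains via Kellogg--Warschawski and the ideal property of the Hilbert--Schmidt class is essentially identical to the paper's Section 6. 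Two small points to make explicit in a write-up: the identity \eqref{eq:frechet_redef} with $\eta\in L^2$ should be justified for the basis functions (their gradients are bounded on $D$, so the integrals converge absolutely, and a density argument identifies the resulting operator with the extension of $F$), and the entrywise domination argument for $\|G^{(k)}\|$ should note that it is valid because all entries are nonnegative.
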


    From the standpoint of numerically approximating $F$, mere boundedness between the Hilbert spaces $L^2(\Omega)$ and $\mathscr{L}_{\rm HS}( L^2_\diamond(\partial \Omega) )$ is not enough, but one also needs compactness that allows approximation by operators of finite rank. However, \cite[Theorem~1.4]{2d_paperi} indicates that
    \[
    \|F \eta \|_{\mathscr{L}_{\rm HS}( L^2_\diamond(\partial \Omega) )} \geq c \| \eta \|_{L^2(\Omega)}
    \]
    for all $\eta$ in certain infinite-dimensional closed subspaces of $L^2(\Omega)$, demonstrating that $F : L^2(\Omega) \to \mathscr{L}_{\rm HS}( L^2_\diamond(\partial \Omega) )$ cannot be compact as such. 

    A straightforward way to introduce a compact version of $F: L^2(\Omega) \to \mathscr{L}_{\rm HS}(L^2_\diamond(\partial \Omega) )$, without losing the attractive Hilbert space structures of its domain and image spaces, is to consider conductivity perturbations in $H^{\varepsilon}(\Omega)$, $\varepsilon > 0$, and exploit the compactness of the embedding $H^{\varepsilon}(\Omega) \hookrightarrow L^2(\Omega)$. Here, we consider a less trivial option and tamper with the image space instead. To this end, let 
    \begin{equation}
    H^{s}_\diamond(\partial \Omega) = \big \{ g\in H^s(\partial \Omega) \mid \inner{g,1}_s = 0 \big \}, \qquad s \in [-\tfrac{1}{2}, \tfrac{1}{2}],
    \end{equation}
    with $\langle \, \cdot \, , \, \cdot \, \rangle_s: H^s(\partial \Omega) \times H^{-s}(\partial \Omega) \to \C$ denoting the {\em sesquilinear} dual evaluation between $H^s(\partial \Omega)$ and $H^{-s}(\partial \Omega)$, which can be understood as a generalization of the $L^2(\partial \Omega)$ inner product.

    \begin{corollary} \label{cor:1}
        Let $\Omega$ be as in Theorem~\ref{thm:main} and
         \( 0 < \varepsilon \leq \tfrac{1}{2} \). 
        Then, the linear map
        \[ F : L^2(\Omega) \longrightarrow \mathscr{L}_{\rm HS}\left( H^{\varepsilon}_\diamond(\partial \Omega), H^{-\varepsilon}_\diamond(\partial \Omega) \right) \]
        is compact.
    \end{corollary}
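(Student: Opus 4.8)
The plan is to obtain Corollary~\ref{cor:1} from Theorem~\ref{thm:main} by a purely functional-analytic factorization: the Sobolev-scale realization of $F$ equals the already-bounded $L^2$-level map of the theorem, pre- and post-composed with the Hilbert--Schmidt embeddings of the boundary Sobolev scale, and it is this outer composition that supplies compactness.

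First I would fix the canonical inclusions $\iota_+ \colon H^\varepsilon_\diamond(\partial\Omega) \hookrightarrow L^2_\diamond(\partial\Omega)$ and $\iota_- \colon L^2_\diamond(\partial\Omega) \hookrightarrow H^{-\varepsilon}_\diamond(\partial\Omega)$, the latter via the dual pairing $\langle\,\cdot\,,\,\cdot\,\rangle_{-\varepsilon}$. For $\varepsilon > 0$ both are compact by the Rellich theorem on the compact manifold $\partial\Omega$ (the mean-free constraint only restricts to a closed subspace and does not affect compactness). The identity \eqref{eq:frechet_redef}, extended by density to $f, g \in H^\varepsilon_\diamond(\partial\Omega)$ through $\langle\,\cdot\,,\,\cdot\,\rangle_{-\varepsilon}$, uses the same integral and the inclusions $H^\varepsilon_\diamond(\partial\Omega) \subset L^2_\diamond(\partial\Omega) \subset H^{-\varepsilon}_\diamond(\partial\Omega)$, so the realization of $F\eta$ as a map $H^\varepsilon_\diamond(\partial\Omega) \to H^{-\varepsilon}_\diamond(\partial\Omega)$ is precisely $\iota_- \circ (F\eta) \circ \iota_+$, where $F\eta \in \mathscr{L}_{\rm HS}(L^2_\diamond(\partial\Omega))$ is the operator of Theorem~\ref{thm:main}. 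Hence $F = \Phi \circ F_0$, where $F_0 \colon L^2(\Omega) \to \mathscr{L}_{\rm HS}(L^2_\diamond(\partial\Omega))$ is the bounded map of the theorem and $\Phi \colon T \mapsto \iota_- T \iota_+$ is well defined and bounded from $\mathscr{L}_{\rm HS}(L^2_\diamond(\partial\Omega))$ to $\mathscr{L}_{\rm HS}(H^\varepsilon_\diamond(\partial\Omega), H^{-\varepsilon}_\diamond(\partial\Omega))$ by the ideal property $\|\iota_- T \iota_+\|_{\rm HS} \le \|\iota_-\|\,\|T\|_{\rm HS}\,\|\iota_+\|$.

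The crux is that $\Phi$ is compact. I would prove this by finite-rank approximation: choosing finite-rank operators $A_N \to \iota_-$ and $B_N \to \iota_+$ in operator norm (possible since $\iota_\pm$ are compact), the maps $\Phi_N \colon T \mapsto A_N T B_N$ take values in the finite-dimensional space of operators mapping into the range of $A_N$ and vanishing on the orthogonal complement of the range of $B_N^*$, hence are finite rank; meanwhile the decomposition $\Phi(T) - \Phi_N(T) = (\iota_- - A_N) T \iota_+ + A_N T (\iota_+ - B_N)$ together with the ideal property yields $\|\Phi - \Phi_N\| \le \|\iota_- - A_N\|\,\|\iota_+\| + \|A_N\|\,\|\iota_+ - B_N\| \to 0$. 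Thus $\Phi$ is a norm limit of finite-rank maps, hence compact, and $F = \Phi \circ F_0$ is the composition of a compact map with a bounded one, hence compact, which is the assertion.

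I expect the delicate points to be bookkeeping rather than any single estimate. One must check that \emph{both} embeddings are compact, since sandwiching a Hilbert--Schmidt operator by a compact operator on one side only fails to be compact: if $\iota_+$ were replaced by the identity, then in a singular basis of $\iota_-$ the map $T \mapsto \iota_- T$ multiplies the matrix entry $(m,n)$ by the $m$-th singular value, a factor constant along each row, so each nonzero singular value recurs for infinitely many index pairs and the multiplier does not tend to zero. One must also verify carefully that the Sobolev-scale realization of $F\eta$ really is the two-sided composition $\iota_-(F\eta)\iota_+$, so that the Hilbert--Schmidt structure inherited from $L^2_\diamond(\partial\Omega)$ is the correct one on $\mathscr{L}_{\rm HS}(H^\varepsilon_\diamond(\partial\Omega), H^{-\varepsilon}_\diamond(\partial\Omega))$.
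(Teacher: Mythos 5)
Your proposal is correct, and at the top level it performs the same reduction as the paper: Corollary~\ref{cor:1} follows from Theorem~\ref{thm:main} once the embedding $\mathcal{I}_\varepsilon\colon \mathscr{L}_{\rm HS}(L^2_\diamond(\partial\Omega)) \hookrightarrow \mathscr{L}_{\rm HS}(H^{\varepsilon}_\diamond(\partial\Omega), H^{-\varepsilon}_\diamond(\partial\Omega))$ is shown to be compact, and your identification of the Sobolev-scale realization of $F\eta$ with $\iota_-(F\eta)\iota_+$ is exactly the meaning of that embedding. Where you genuinely diverge is in the proof of this compactness. You argue abstractly: factor $\mathcal{I}_\varepsilon T = \iota_- T \iota_+$ through the two compact Rellich embeddings on $\partial\Omega$, approximate each of $\iota_\pm$ by finite-rank operators, and conclude via the two-sided ideal property of the Hilbert--Schmidt norm; your side remark that one-sided sandwiching would \emph{not} suffice (the induced multiplier on matrix entries is constant along rows, hence does not vanish at infinity on $\N^2$) is precisely the right obstruction to check. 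The paper instead constructs an explicit orthonormal basis $\{\phi_{i,j}^{\varepsilon,\otimes}\}$ of $\mathscr{L}_{\rm HS}(H^{\varepsilon}_\diamond(\partial\Omega), H^{-\varepsilon}_\diamond(\partial\Omega))$ from the eigenfunctions of $\Lambda(1)$, which simultaneously diagonalize the whole scale $H^s_\diamond(\partial\Omega)$, $|s|\le \tfrac12$, and computes that the truncation error onto the leading $M\times M$ block is at most $(\lambda_1\lambda_{M+1})^{2\varepsilon}\|T\|^2_{\mathscr{L}_{\rm HS}(L^2_\diamond(\partial\Omega))}$; in that picture your embeddings $\iota_\pm$ appear concretely as the diagonal maps $\phi_i^{\varepsilon}\mapsto\lambda_i^{\varepsilon}\phi_i$ with singular values tending to zero. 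Your route is shorter and more general (it works for any pair of compactly embedded Hilbert scales), at the price of invoking Rellich on a Lipschitz curve and the equivalence of the standard $H^{\varepsilon}_\diamond(\partial\Omega)$ inner product with the spectrally defined one; the paper's computation yields in addition an explicit convergence rate for the finite-rank approximants, expressed in terms of the eigenvalues of the ND map, which is of independent value for numerical truncation.
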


The proofs of Theorem~\ref{thm:main} and Corollary~\ref{cor:1} are structured as follows. By drawing on material in \cite{2d_paperi} and \cite[Section~3]{Autio2024}, we first demonstrate in Section~\ref{sec:F-matrices} that the continuity of $F : L^2(\Omega) \to \mathscr{L}_{\rm HS}( L^2_\diamond(\partial \Omega) )$ can be reduced to the uniform boundedness of a certain countable set of linear operators on $\ell^2(\N)$, represented as infinite matrices, if $\Omega$ is the unit disk $D$. Section~\ref{sec:asymptotic} utilizes Gr\"onvall's inequality to establish upper bounds for the elements of these matrices, which facilitates their treatment as certain complicated product terms are replaced by simple exponential expressions. The proof of Theorem~\ref{thm:main} for $\Omega = D$ is then completed in Section~\ref{sec:schur} by resorting to the Schur test. Finally, Section~\ref{sec:generalization} proves first Theorem~\ref{thm:main} in its general form by employing the Riemann mapping theorem (cf.~\cite{2d_paperi}) and then Corollary~\ref{cor:1} by showing that the embedding $\mathscr{L}_{\rm HS}( L^2_\diamond(\partial \Omega)) \hookrightarrow\mathscr{L}_{\rm HS}( H^{\varepsilon}_\diamond(\partial \Omega), H^{-\varepsilon}_\diamond(\partial \Omega))$ is compact for $0 < \varepsilon \leq \tfrac{1}{2}$ if $\Omega$ is (only) Lipschitz.
 
\section{Infinite matrix representation for the operator \texorpdfstring{\( F \)}{F} in the unit disk}
\label{sec:F-matrices}

Let us assume that $\Omega = D$ is the unit disk. Following the ideas in \cite{2d_paperi,Autio2024}, we introduce an orthonormal Zernike polynomial basis \cite{Zernike1934} for $L^2(D)$ in the polar coordinates $(r,\theta)$ via 
\begin{equation} \label{eq:zernike}
    \psi_{j, k}(r, \theta) = \sqrt{\frac{\abs{j} + 2 k + 1}{\pi}} R^{\abs{j}}_{\abs{j} + 2 k}(r) \, \e^{\I j\theta}, \qquad j \in \Z, \ k \in \N_0,
\end{equation}
where 
\begin{equation*}
    R^{\abs{j}}_{\abs{j} + 2 k}(r) = \sum^k_{i=0} (-1)^i \binom{\abs{j} + 2 k - i}{i} \binom{\abs{j} + 2 k - 2 i}{k - i} \, r^{\abs{j} + 2 k - 2 i}.
\end{equation*}
A given $\eta \in L^2(D)$ is expanded in the Zernike basis as
\[
\eta = \sum_{j \in \Z} \sum_{k \in \N_0} c_{j,k}(\eta) \psi_{j,k}
\]
where $c_{j,k}(\eta) = \langle \eta, \psi_{j,k}\rangle_{L^2(D)}$. The standard Fourier basis (without the constant function)
  \begin{equation*}
        f_m(\theta) = \frac{1}{\sqrt{2 \pi}}\e^{\I m\theta}, \qquad m \in \Z' := \Z \setminus \{ 0 \},
    \end{equation*}
    serves as our orthonormal basis for $L^2_\diamond(\partial \Omega)$. According to \cite[Eq.~(4.5)]{2d_paperi}, these bases interplay with the linearized forward map $F$ as follows:
        \begin{equation} \label{eq:expl_coef}
        a^{j,k}_{m,n} = \big \langle (F \psi_{j,k})f_m, f_n \big \rangle = - \frac{1}{\sqrt{\pi}} \, \frac{\sqrt{\abs{j} + 2 k + 1}}{\min \{\abs{m},\abs{n} \} + \abs{j} + k} \, \prod^k_{i=1} \frac{\min \{\abs{m},\abs{n} \} - i}{\abs{j} + \min \{\abs{m},\abs{n} \} + k - i}
    \end{equation}
if $n=m+j$, $k < \min \{\abs{m},\abs{n} \}$ and $mn > 0$, and $a^{j,k}_{m,n}= 0$ for all other combinations of $j \in \Z$, $k \in \N_0$ and $m,n \in \Z'$. When $k=0$, the product in \eqref{eq:expl_coef} is defined to take the value $1$. 

    Define
    \[
    \mathcal{K}_j = \overline{\text{span}\{\psi_{j,k} \, | \, k \in \N_0 \}} \quad \text{such that} \quad L^2(D) = \bigoplus_{j \in \Z} \mathcal{K}_j
    \]
    and note that the orthogonal projection $P_j: L^2(D) \to \mathcal{K}_j$ is given by
    \[
    P_j\eta = \sum_{k \in \N_0} c_{j,k}(\eta) \psi_{j,k}.
    \]
    Let us expand 
    \begin{equation}
    \label{eq:P_expansion}
    F = \sum_{j \in \Z} F P_j.
    \end{equation}
    For $\eta \in L^2(D)$, the matrix coefficients of the bounded linear operator $F P_j \eta: L^2_\diamond(\partial D) \to L^2_\diamond(\partial D)$ in the Fourier basis read
    \begin{equation}
    \label{eq:FPj_coef}
    a^j_{m,n}(\eta) = \big \langle (F P_j \eta)f_m, f_n \big \rangle = \sum_{k \in \N_0} a_{m,n}^{j,k} c_{j,k}(\eta), \qquad m, n \in \Z',
    \end{equation}
    which are nonzero only if $n = m + j$ due to \eqref{eq:expl_coef}. This means that for any $\eta \in L^2(D)$, the matrix representation of $F P_j \eta$ in the Fourier basis only has nonzero entries on its $j$th diagonal. In particular,
    \begin{equation}
    \label{eq:HS-ortho}
     \big \langle F P_i \eta_1, F P_j \eta_2 \big \rangle_{\mathscr{L}_{\rm HS}(L^2_\diamond(\partial D))}
    = 0 \qquad \text{if} \ i \not= j
    \end{equation}
    for any $\eta_1, \eta_2 \in L^2(D)$ because $F P_i \eta_1$ and  $F P_j \eta_2$ do not have nonzero elements at same positions in their matrix representations if $i \not= j$.

    Furthermore, it follows from \eqref{eq:expl_coef} that the only nonempty diagonal,~i.e.,~the $j$th one, in the matrix representation \eqref{eq:FPj_coef} for $F P_j$ can be given with the help of an infinite lower a triangular matrix $F^{|j|}$ given component-wise as \cite[eq.~(3.11) \& Remark~3.1]{Autio2024},
    \begin{equation} \label{eq:inf_F_matrix}
         F^{\abs{j}}_{m,k} = \left\{
        \begin{array}{ll}
            a^{\abs{j},k-1}_{m,m+\abs{j}} \quad & \text{if } 1 \leq k  \leq m \in \N, \\[1mm]
            0 \quad & \text{otherwise},
        \end{array}
        \right.
    \end{equation} 
    and the vectorized Zernike coefficients for the angular frequency $j$
    \[
    \vek{c}^j = \big[c_{j,k-1}(\eta)\big]_{k=1}^\infty.
    \]
    More precisely, 
    \begin{equation}
    \label{eq:a_diag}
     a^j_{m,m+j}(\eta) = 
    \left\{
    \begin{array}{ll}
    a^{j}_{-m-j,-m}(\eta), & \quad m < \min\{-j,0\}, \\[1mm]
    0,  & \quad \min\{-j,0\} < m < \max\{-j,0\}, \\[1mm]
    (F^{|j|} \vek{c}^j)_{\min\{m,m+j\}}, & \quad m > \max\{-j,0\}
    \end{array}
    \right.
    \end{equation}
for any $j \in \Z$. Note that the empty quadrants in \eqref{eq:a_diag} and the triangular structure of $F^{\abs{j}}$ originate from the conditions $m n > 0$ and  $k < \min \{\abs{m},\abs{n} \}$, respectively, for the nonzero elements in \eqref{eq:expl_coef}; see \cite[(3.8)--(3.10) \& Remark~3.1]{Autio2024}. In particular,
\begin{equation}
\label{eq:HS_l2}
    \| FP_j \eta \|_{\mathscr{L}_{\rm HS}(L^2_\diamond(\partial D))}^2 = \sum_{m,n \in \Z'} |a^j_{m,n}(\eta)|^2  = 
    \sum_{m \in \Z'} |a^j_{m,m+j}(\eta)|^2 = 2 \big\| F^{|j|} \mathbf{c}^j \big\|_{\ell^2(\N)}^2,
    \end{equation}
    where the final equality holds due to the symmetry of the presentation for the nonempty diagonal of the infinite matrix $(a^j_{m,n})_{m,n \in \Z'}$ in \eqref{eq:a_diag} with respect to the (possibly virtual) zero element $a^j_{-j/2,j/2}(\eta)$. Combined with \eqref{eq:HS-ortho}, this provides the interface for proving the sought-for connection between the Hilbert--Schmidt norm of $F \eta$ and the norms of $F^{|j|}$ on $\ell^2(\N)$.
    
    \begin{lemma} \label{lemma:norms}
        For any \( \eta \in L^2(D) \),
        \begin{equation}
            \| F \eta \|_{\mathscr{L}_{\rm HS}(L^2_\diamond(\partial D))} \leq \sqrt{2} \, \sup_{j \in \Z} \| F^\abs{j} \|_{\mathscr{L}( \ell^2(\N))} \, \| \eta \|_{L^2(D)}.
        \end{equation}
    \end{lemma}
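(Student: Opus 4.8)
The plan is to assemble the proof from three ingredients already in place: the Hilbert--Schmidt orthogonality relation \eqref{eq:HS-ortho}, the diagonal identity \eqref{eq:HS_l2}, and Parseval's relation for the Zernike basis. Starting from the expansion \eqref{eq:P_expansion}, I read $F\eta = \sum_{j \in \Z} F P_j \eta$. Since the matrix representation of $F P_j \eta$ in the Fourier basis is supported on the single $j$th diagonal, the summands $\{F P_j \eta\}_{j \in \Z}$ occupy pairwise disjoint positions and are therefore mutually orthogonal in $\mathscr{L}_{\rm HS}(L^2_\diamond(\partial D))$ by \eqref{eq:HS-ortho}. A Pythagorean identity then yields
\[
\|F\eta\|_{\mathscr{L}_{\rm HS}(L^2_\diamond(\partial D))}^2 = \sum_{j \in \Z} \|F P_j \eta\|_{\mathscr{L}_{\rm HS}(L^2_\diamond(\partial D))}^2.
\]

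Next I would invoke \eqref{eq:HS_l2} to rewrite each summand as $2\|F^{|j|} \mathbf{c}^j\|_{\ell^2(\N)}^2$ and then bound it through the operator norm, $\|F^{|j|} \mathbf{c}^j\|_{\ell^2(\N)}^2 \leq \|F^{|j|}\|_{\mathscr{L}(\ell^2(\N))}^2 \, \|\mathbf{c}^j\|_{\ell^2(\N)}^2$. Pulling the supremum over $j$ out of the sum gives
\[
\|F\eta\|_{\mathscr{L}_{\rm HS}(L^2_\diamond(\partial D))}^2 \leq 2 \Big( \sup_{j \in \Z} \|F^{|j|}\|_{\mathscr{L}(\ell^2(\N))}^2 \Big) \sum_{j \in \Z} \|\mathbf{c}^j\|_{\ell^2(\N)}^2.
\]
Because the Zernike functions form an orthonormal basis for $L^2(D)$, Parseval's relation collapses the remaining double sum, $\sum_{j \in \Z} \|\mathbf{c}^j\|_{\ell^2(\N)}^2 = \sum_{j,k} |c_{j,k}(\eta)|^2 = \|\eta\|_{L^2(D)}^2$, and taking square roots delivers the asserted inequality (the factor $\sqrt{2}$ surviving, and $\sqrt{\sup_j \|F^{|j|}\|^2} = \sup_j \|F^{|j|}\|$ since the norms are nonnegative).

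The only point demanding genuine care is the convergence that legitimizes the Pythagorean step: a priori, \cite[Proposition~1.1]{2d_paperi} guarantees only $F\eta \in \mathscr{L}(L^2_\diamond(\partial D))$, not membership in $\mathscr{L}_{\rm HS}$, so I must not presuppose square-summability of the matrix entries. The clean remedy is to interpret $\|F\eta\|_{\mathscr{L}_{\rm HS}(L^2_\diamond(\partial D))}^2$ directly as the (possibly infinite) sum $\sum_{m,n \in \Z'} |\langle (F\eta) f_m, f_n \rangle|^2$ of nonnegative terms. The entry at position $(m,n)$ receives a contribution from exactly one term of the expansion, the one with $j = n-m$, so that $\langle (F\eta) f_m, f_n \rangle = a^{n-m}_{m,n}(\eta)$; this lets me regroup the nonnegative double series by diagonals without any convergence hypothesis (Tonelli), recovering precisely $\sum_{j \in \Z} \|F P_j \eta\|_{\mathscr{L}_{\rm HS}(L^2_\diamond(\partial D))}^2$. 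The displayed estimate then shows this quantity is finite whenever $\sup_{j} \|F^{|j|}\|_{\mathscr{L}(\ell^2(\N))}$ is finite --- which is exactly the nontrivial uniform bound reserved for the later sections --- while if that supremum were infinite the claimed inequality would hold vacuously.
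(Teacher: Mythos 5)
Your proposal is correct and follows essentially the same route as the paper's proof: expand $F\eta$ via \eqref{eq:P_expansion}, use the orthogonality \eqref{eq:HS-ortho} and the identity \eqref{eq:HS_l2} to reduce to $2\sum_{j}\|F^{|j|}\mathbf{c}^j\|_{\ell^2(\N)}^2$, bound by the operator norms, and invoke Parseval for the Zernike basis. Your extra remark on justifying the Pythagorean step by treating the Hilbert--Schmidt norm as a nonnegative (possibly infinite) double series regrouped by diagonals is a careful addition the paper leaves implicit, but it does not change the argument.
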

    \begin{proof}
        By virtue of \eqref{eq:P_expansion}, \eqref{eq:HS-ortho} and \eqref{eq:HS_l2}, we have
        \begin{equation*}
            \| F\eta \|^2_{\mathscr{L}_\text{HS}(L^2_\diamond(\partial D))} 
            = \sum_{j \in \Z} \| FP_j \eta \|_{\mathscr{L}_\text{HS}(L^2_\diamond(\partial D))}^2 = 2\sum_{j \in \Z}  \big\| F^{|j|} \mathbf{c}^j \big\|_{\ell^2(\N)}^2.
        \end{equation*}
        Hence,
        \[
        \| F\eta \|^2_{\mathscr{L}_\text{HS}(L^2_\diamond(\partial D))} \leq 2 \sup_{j \in \Z} \| F^\abs{j} \|_{\mathscr{L}( \ell^2(\N))}^2 \sum_{j \in \Z}  \| \mathbf{c}^j \|_{\ell^2(\N)}^2 =  2 \sup_{j \in \Z} \| F^\abs{j} \|_{\mathscr{L}( \ell^2(\N))}^2 \sum_{j \in \Z} \sum_{k \in \N_0}  | c_{j,k}(\eta) |^2.
        \]
        Since $\{ c_{j,k}(\eta)\}_{j \in \Z, k \in \N_0}$ are the coefficients of an expansion of $\eta$ with respect to an orthonormal basis of $L^2(D)$, the proof is complete.
    \end{proof}

\section{Upper bounds for the elements of \texorpdfstring{\( F \)}{F}}
\label{sec:asymptotic}

A special feature of $F^{|j|}$, $j \in \Z$, is that all its elements are nonpositive, which seems compatible  with using the classical Schur test for proving the boundedness of $F^{|j|}$. However, the product term in \eqref{eq:expl_coef} leads to technical difficulties in directly applying such a strategy, and thus the purpose of this section is to bound the product term by an exponential expression. This enables using the integral test in connection to the Schur test in Section~\ref{sec:schur}.

    From \eqref{eq:inf_F_matrix} and \eqref{eq:expl_coef}, we get
    \begin{align} \label{eq:abs_F}
        \big| F_{m,k}^{\abs{j}} \big| &= \frac{1}{\sqrt{\pi}} \frac{\sqrt{2k+\abs{j}-1}}{m+\abs{j}+k-1}\prod_{i=1}^{k-1}\frac{m-i}{m+\abs{j}+k-1-i} \nonumber \\
        &= \frac{1}{\sqrt{\pi}} \frac{\sqrt{2k+\abs{j}-1}}{m+\abs{j}+k-1} \frac{\Gamma(m) \Gamma(m + \abs{j})}{\Gamma(m - k + 1) \Gamma(m + \abs{j} + k - 1)} \nonumber \\[1mm]
        &= \frac{1}{\sqrt{\pi}} \frac{\sqrt{2k+\abs{j}-1}}{m+\abs{j}} \frac{\Gamma(m) \Gamma(m + \abs{j} + 1)}{\Gamma(m - k + 1) \Gamma(m + \abs{j} + k)}
    \end{align}
    for $1 \leq k \leq m \in \N$ and $j \in \Z$. Here, \( \Gamma \) denotes the gamma function and we used the identity \( z \Gamma(z) = \Gamma(z + 1) \) in the simplification.
    Let us isolate the fraction of gamma functions in \eqref{eq:abs_F} and replace \( k \) with a continuous variable~\( x \):
    \begin{equation} \label{eq:rho}
        \rho(x) := \frac{ \Gamma(m) \, \Gamma(m + \abs{j} + 1)}{\Gamma(m - x + 1) \, \Gamma(m + \abs{j} + x)},\qquad 1 \leq x \leq m.
    \end{equation}
    We are interested in finding an upper bound for \eqref{eq:rho} with respect to \( x \) while keeping \( m \text{ and } j \) fixed.

    One could consider,~e.g.,~Stirling's approximation for the gamma function, but for our purposes a different approach turns out more productive.
    Let us differentiate \eqref{eq:rho} with respect to \( x \). After using the product rule and the identity
    \begin{equation*}
        \frac{\di}{\di z} \frac{1}{\Gamma(z)} = - \frac{\psi(z)}{\Gamma(z)},
    \end{equation*}
    where \( \psi(z) \) is the digamma function, we get
    \begin{equation}\label{eq:D_rho}
        \frac{\di}{\di x} \rho(x) = \rho(x) \big( \psi(m - x + 1) - \psi(m + \abs{j} + x) \big).
    \end{equation}
    This means that the ratio between the derivative of \( \rho(x) \) and \( \rho(x) \) itself, i.e., the logarithmic derivative of $\rho(x)$, is a difference of two shifted digamma functions \( \psi(m - x + 1)-\psi(m + \abs{j} + x) \), which is negative for all \( 1 \leq x \leq m \) due to the strict monotonicity of the digamma function on the positive real axis. Morally, if we replace this difference by something less negative in \eqref{eq:D_rho}, we can construct a function that has a lower rate of decay than $\rho$ by resorting to Gr\"onwall's inequality.

    The derivative of the digamma function is the trigamma function \( \psi_{1}(z) \) that is a strictly convex and strictly decreasing function on the positive real axis, admitting the lower bound (see,~e.g.,~\cite[Lemma~1]{Guo2013}),
    \begin{equation} \label{eq:trigamma_ineq}
        \psi_1(z) \geq \frac{1}{z}, \qquad  z > 0.
    \end{equation}
    We may thus use the fundamental theorem of calculus, the Jensen's inequality and \eqref{eq:trigamma_ineq} to get an estimate for the difference of digamma functions in \eqref{eq:D_rho}:
    \begin{align*}
        - \left( \psi(m + \abs{j} + x) - \psi(m - x + 1) \right)
        &= - (2 x + \abs{j} - 1) \left( \frac{1}{2 x + \abs{j} - 1} \int_{m - x + 1}^{m + \abs{j} + x} \psi_1(z) \, \di z \right) \\[1mm]
        &\leq - (2 x + \abs{j} - 1) \, \psi_1 \left( \frac{1}{2 x + \abs{j} - 1} \int_{m - x + 1}^{m + \abs{j} + x} z \, \di z \right) \\[1mm]
        &\leq - \frac{2 x + \abs{j} - 1}{m + (\abs{j} + 1) / 2}, \qquad 1 \leq x \leq m.
    \end{align*}
    As $\rho(x) > 0$ for $1 \leq x \leq m$, it follows from \eqref{eq:D_rho} that
    \begin{equation}\label{eq:D_rho_ine}
        \frac{\di}{\di x } \rho(x) \leq \rho(x) \! \left( - \frac{2 x + \abs{j} - 1}{m + (\abs{j} + 1) / 2} \right), \qquad 1 \leq x \leq m.
    \end{equation}
   Applying the differential form of Gr\"onwall's inequality to \eqref{eq:D_rho_ine} gives
    \begin{equation} \label{eq:rho_ine}
        \rho(x) 
        \leq \rho(1) \exp \left( - \int_1^x \frac{2 z + \abs{j} - 1}{m + (\abs{j} + 1) / 2}  \di z \right) 
        = {\rm e}^{ - \frac{2 (x + \abs{j}) (x - 1)}{2 m + \abs{j} + 1}},  \qquad 1 \leq x \leq m.
    \end{equation}
    Substituting the estimate \eqref{eq:rho_ine} for $\rho$ in \eqref{eq:abs_F} finally provides the desired upper bounds for the absolute values of the matrix elements,
    \begin{equation} \label{eq:asymp}
        \big| F_{m, k}^{\abs{j}} \big| \leq \frac{1}{\sqrt{\pi}} \frac{\sqrt{2 k + \abs{j} - 1}}{m + \abs{j}} \, {\rm e}^{-\frac{2(k + |j|) (k - 1)}{2 m + \abs{j} + 1 }} =: \xi^\abs{j}_m(k), \qquad 1 \leq k \leq m < \infty, \ \ j \in \Z,
    \end{equation}
    that according to our numerical tests seem to capture the asymptotic behavior of \eqref{eq:abs_F} when $m$ and/or $k$ approach infinity.
    
    Indeed, let us numerically illustrate the sharpness of the upper bound in \eqref{eq:asymp}. 
    Figure~\ref{fiq:upper} compares $| F_{m, k}^{\abs{j}} |$ and \( \xi^\abs{j}_{m}(x) \), with the understanding that $x$ is a continuum version of $1 \leq k \leq m$.
    We have selected the indices \( m \in \{ 15, 30, 100 \} \) and \( j \in \{ 0, 3 \} \), for which $| F_{m, k}^{\abs{j}} |$ and \( \xi^\abs{j}_{m}(x) \) are plotted as functions \( k \) and \( x \), respectively, over the line segment \( [1, 16] \).
    Based on this visual demonstration, the upper bound \eqref{eq:asymp} seems reasonable.
    \begin{figure}[ht]
    \centering
    \begin{minipage}{0.5\textwidth}
        \centering
        \includegraphics[width=\textwidth]{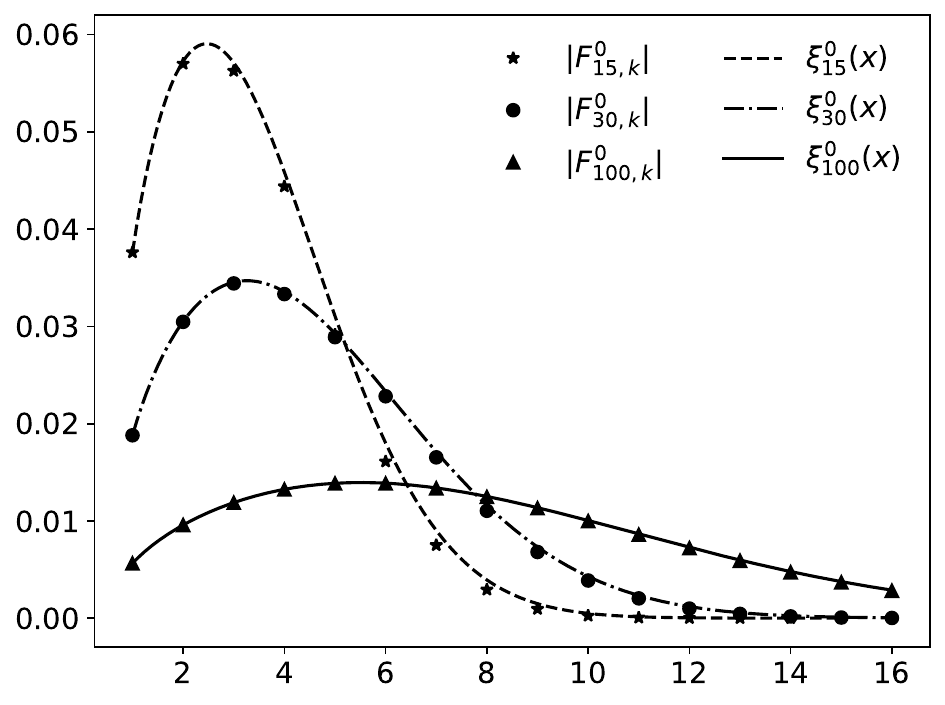}
    \end{minipage}\hfill
    \begin{minipage}{0.5\textwidth}
        \centering
        \includegraphics[width=\textwidth]{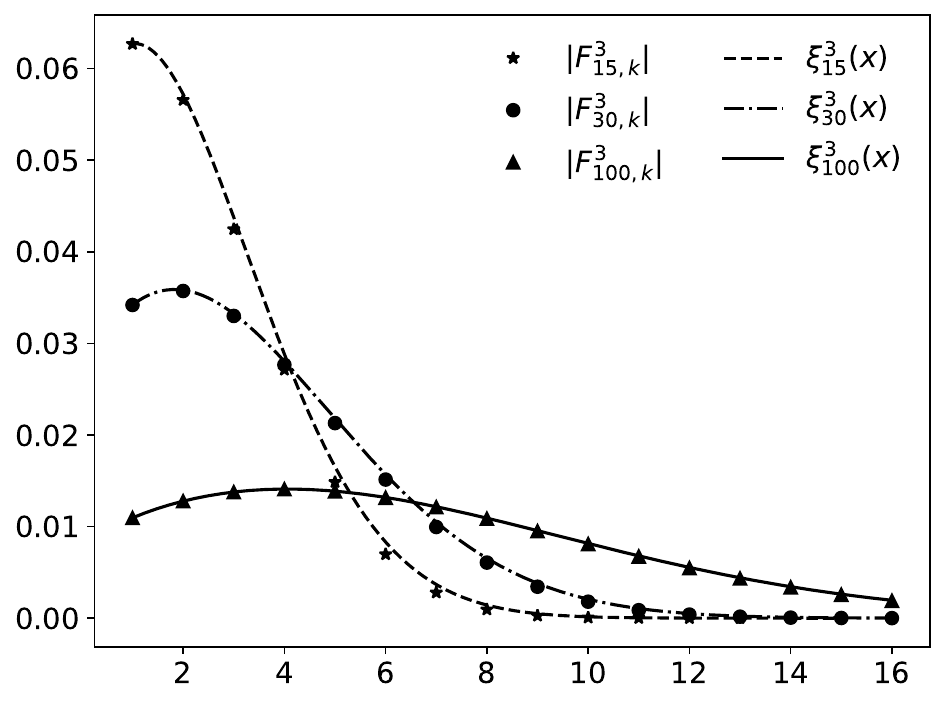}
    \end{minipage}
    \caption{Visual demonstration on the tightness of the upper bound \( \xi^\abs{j}_m(x) \) in \eqref{eq:asymp}.}
    \label{fiq:upper}
    \end{figure}

\section{Proof of Theorem~\ref{thm:main} for the unit disk}
\label{sec:schur}

    Before proceeding with the proof of Theorem~\ref{thm:main} in the case $\Omega = D$ is the unit disk, let us recall a classical tool for proving boundedness for infinite matrices with positive elements, namely the Schur test \cite{Schur1911}.
    \begin{theorem} \label{thm:help_Schur}
        Let $B=(b_{p,q})_{p,q = 1}^{\infty}$ be an infinite matrix with nonnegative elements $b_{p,q} \geq 0$ for all $p$ and $q$. 
        Suppose there are two positive infinite vectors $[u_l]^\infty_{l = 1}$ and $[v_l]^\infty_{l = 1}$ such that
        \begin{equation}\label{eq:schur}
            \sum^\infty_{q = 1} b_{p,q} u_q \leq C_1 v_p \quad \text{and} \quad \sum^\infty_{p = 1} b_{p,q} v_p \leq C_2 u_q \qquad \forall p, q \in \N,
        \end{equation}
        where \( C_1, C_2 > 0 \) are independent of the indices \( p \) and \( q \).
        Then $B\in \mathscr{L}(\ell^2(\N))$, with
        \begin{equation*}
            \|B\|_{\mathscr{L}(\ell^2(\N))} \leq \sqrt{C_1 C_2}.
        \end{equation*}
    \end{theorem}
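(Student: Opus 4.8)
The plan is to establish the operator-norm bound directly from the definition, by testing $B$ against an arbitrary sequence $x = (x_q)_{q=1}^\infty \in \ell^2(\N)$ and controlling $\|Bx\|_{\ell^2(\N)}$ through a single application of the Cauchy--Schwarz inequality that exploits the two hypotheses in \eqref{eq:schur} symmetrically. The crucial algebraic device is to split each matrix entry using the strictly positive weight $u_q$: writing $b_{p,q} = (b_{p,q} u_q)^{1/2} (b_{p,q} u_q^{-1})^{1/2}$, which is legitimate since $u_q > 0$ and $b_{p,q} \geq 0$, lets the first factor be absorbed by the first condition in \eqref{eq:schur} while the second factor carries the information about $x$.

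Concretely, I would first bound the $p$th component of $Bx$ row-wise. Since $|(Bx)_p| \leq \sum_q b_{p,q} |x_q|$, the splitting above together with Cauchy--Schwarz gives
\begin{equation*}
|(Bx)_p|^2 \leq \Big( \sum_q b_{p,q} u_q \Big) \Big( \sum_q b_{p,q} u_q^{-1} |x_q|^2 \Big) \leq C_1 v_p \sum_q b_{p,q} u_q^{-1} |x_q|^2,
\end{equation*}
where the final inequality is the first estimate in \eqref{eq:schur}. Summing over $p$ and interchanging the order of summation---permissible by Tonelli's theorem because all summands are nonnegative---yields
\begin{equation*}
\| Bx \|_{\ell^2(\N)}^2 \leq C_1 \sum_q u_q^{-1} |x_q|^2 \sum_p b_{p,q} v_p \leq C_1 C_2 \sum_q |x_q|^2 = C_1 C_2 \| x \|_{\ell^2(\N)}^2,
\end{equation*}
where the second step invokes the remaining estimate in \eqref{eq:schur} and the cancellation $u_q^{-1} \cdot C_2 u_q = C_2$. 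Taking square roots then gives both the boundedness of $B$ and the asserted norm bound $\|B\|_{\mathscr{L}(\ell^2(\N))} \leq \sqrt{C_1 C_2}$.

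The two invocations of \eqref{eq:schur} and the cancellation of weights are routine; the step requiring genuine care is the interchange of the $p$- and $q$-summations in the double sum, which I expect to be the main (if minor) obstacle. One must justify the manipulations \emph{before} finiteness of the right-hand side is known, and this is exactly what Tonelli's theorem for nonnegative double series supplies: the resulting finite bound then retroactively shows that $\sum_q b_{p,q}|x_q| < \infty$ for every $p$, so that $(Bx)_p$ is well defined and the triangle inequality used at the outset is legitimate. A secondary point worth recording is that this direction uses only the strict positivity of $u_q$ (to form $u_q^{-1}$) together with the nonnegativity of $v_p$; it is the symmetry between the roles of $u$ and $v$ in the hypotheses that produces the clean geometric-mean constant $\sqrt{C_1 C_2}$ rather than $C_1$ or $C_2$ alone.
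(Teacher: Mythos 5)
Your argument is correct and complete: the weight-splitting $b_{p,q} = (b_{p,q}u_q)^{1/2}(b_{p,q}u_q^{-1})^{1/2}$, the row-wise Cauchy--Schwarz estimate, the Tonelli interchange, and the retroactive justification that $(Bx)_p$ is well defined are all in order, and they yield exactly the bound $\sqrt{C_1C_2}$. The paper itself offers no proof of this statement --- it is quoted as a classical result with a citation to Schur --- and what you have written is the standard textbook proof of the Schur test, so there is nothing to compare against and nothing to add.
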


    According to Lemma~\ref{lemma:norms}, the continuity of $F: L^2(D) \to \mathscr{L}_{\rm HS}(L^2_\diamond(\partial D))$ follows by showing that the infinite lower triangular matrices $F^{|j|}: \ell^2(\N) \to \ell^2(\N)$, defined by \eqref{eq:inf_F_matrix}, are uniformly bounded over $j \in \Z$. As a consequence, the following lemma completes the proof of Theorem~\ref{thm:main} for $\Omega = D$.
    \begin{lemma}\label{Lemma_F_L2}
        The family of infinite matrices $\{ F^\abs{j} \}_{j \in \Z}$ is uniformly bounded on $\ell^2(\N)$. More precisely,
        \begin{equation}
        \label{eq:F_norm}
            \big\| F^\abs{j} \big\|_{\mathscr{L}\left( \ell^2(\N) \right)} \leq  2^\frac{7}{2} 
        \end{equation}
        for all $j \in \Z$.
    \end{lemma}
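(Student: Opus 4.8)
The plan is to bound each triangular matrix $F^{\abs{j}}$ by means of the Schur test (Theorem~\ref{thm:help_Schur}), but applied not to $F^{\abs{j}}$ directly: since all entries of $F^{\abs{j}}$ are nonpositive and the bound \eqref{eq:asymp} controls their absolute values, I would introduce the nonnegative matrix $\Xi^{\abs{j}}$ with entries $\xi^{\abs{j}}_m(k)$ for $1\le k\le m$ and $0$ otherwise. Because the operator norm of any matrix is dominated by that of its entrywise modulus, and this norm is monotone under entrywise domination between nonnegative matrices, one gets $\norm{F^{\abs{j}}}_{\mathscr{L}(\ell^2(\N))}\le\norm{\Xi^{\abs{j}}}_{\mathscr{L}(\ell^2(\N))}$. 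It then suffices to exhibit positive weights $[u_k]$, $[v_m]$ and constants $C_1,C_2$, all \emph{independent of $j$}, verifying the two conditions \eqref{eq:schur} for $\Xi^{\abs{j}}$; the asserted bound is equivalent to $C_1 C_2\le 2^7$.

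For the weights, the Gaussian factor in \eqref{eq:asymp} dictates the parabolic scaling $k\sim\sqrt m$, so I would look for weights of order $k^{-1/2}$ and $m^{-1/2}$. The delicate point is uniformity in $j$. The tempting choice $u_k=(2k+\abs{j}-1)^{-1/2}$, which would cancel the numerator of \eqref{eq:asymp}, is \emph{fatal}: as $\abs{j}\to\infty$ it forces $u_1\to0$ while the corresponding column sum stays bounded below, so the Schur ratio blows up. I would therefore use a $j$-\emph{independent} column weight $u_k=k^{-1/2}$ together with a row weight $v_m$ of the shape $(2m+\abs{j}+1)^{1/2}/(m+\abs{j})$ (which is $\asymp m^{-1/2}$ but carries the shift by $\abs{j}$ inherited from the prefactor). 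The resulting asymmetry between $u$ and $v$ is what makes both estimates close uniformly.

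For the row condition $\sum_{k=1}^m\xi^{\abs{j}}_m(k)\,u_k\le C_1 v_m$, I would complete the square in the exponent, $2(k+\abs{j})(k-1)=2\bigl(k+\tfrac{\abs{j}-1}{2}\bigr)^2-\tfrac{(\abs{j}+1)^2}{2}$, so that with $D=2m+\abs{j}+1$ the summand is a shifted Gaussian $\e^{-2(k+(\abs{j}-1)/2)^2/D}$ times the $k$-independent blow-up factor $\e^{(\abs{j}+1)^2/(2D)}$. The leftover factor $\sqrt{(2k+\abs{j}-1)/k}$ is tamed by the elementary inequality $\sqrt{1+x}\le 1+\sqrt x$, reducing the sum (via the integral test) to standard Gaussian integrals, each $\asymp\sqrt D$. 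The growing prefactor is then cancelled exactly by the shift identity $\e^{y_0^2}\int_{y_0}^\infty\e^{-y^2}\,\di y\le\tfrac{\sqrt\pi}{2}$, where $y_0^2=(\abs{j}+1)^2/(2D)$, leaving $\sum_{k=1}^m\xi^{\abs{j}}_m(k)k^{-1/2}\lesssim\sqrt D/(m+\abs{j})\asymp v_m$ with a $j$-independent $C_1$.

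For the column condition $\sum_{m\ge k}\xi^{\abs{j}}_m(k)\,v_m\le C_2 u_k$, I would fix $k$, set $A=2(k+\abs{j})(k-1)$, and substitute $w=A/(2m+\abs{j}+1)$; after bounding the denominator $A+(\abs{j}-1)w\gtrsim A$ on the integration range, the integral majorant of the $m$-sum becomes $\int_0^{w_k}\tfrac{\mathrm{const}}{\sqrt w}\,\e^{-w}\,\di w\le\mathrm{const}\cdot\Gamma(\tfrac12)/\sqrt A\asymp k^{-1/2}=u_k$, again uniformly in $j$. The main obstacle is exactly this: one cannot replace the kernel by a crude majorant, because discarding the exponential in \eqref{eq:asymp} leaves a column sum that is finite but of order $1$ for large $k$, whereas $u_k=k^{-1/2}\to0$; the Gaussian (resp.\ Gamma) decay must be retained and integrated, and it is precisely this decay that supplies the compensating $k^{-1/2}$, all while verifying that neither $C_1$ nor $C_2$ degenerates as $\abs{j}\to\infty$. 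A minor bookkeeping issue is that the column summand is only unimodal for large $k$, so the integral test has to be combined with a (negligible) bound on its peak term.
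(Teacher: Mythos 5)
Your proposal is correct and follows essentially the same route as the paper's proof: the Schur test applied to the entrywise majorant furnished by \eqref{eq:asymp}, with the column weight $u_k = k^{-1/2}$ and a row weight equivalent to $(m+\abs{j})^{-1/2}$, the sums reduced via the integral test to Gaussian/Gamma-type integrals that close uniformly in $j$, and the peak-term correction accounting for the unimodality of the column summand. The only cosmetic deviations are completing the square versus a direct change of variables in the row estimate, and your choice $A = 2(k+\abs{j})(k-1)$ in the column estimate, which vanishes at $k=1$ and so requires a (trivial) separate treatment of that term --- the paper sidesteps this by passing to $b = 2k(k+\abs{j})$ at the cost of a factor $\e^2$.
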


    \begin{proof}
        Define a pair of infinite vectors via
        \begin{equation*}
            \vek{u} = \big[l^{-\frac{1}{2}} \big]_{l = 1}^{\infty}
            \qquad \text{and}
            \qquad \vek{v} = \big[ (l + \abs{j})^{-\frac{1}{2}} \big]_{l = 1}^{\infty}.
        \end{equation*}
        We aim to show that these satisfy the conditions \eqref{eq:schur} in Theorem~\ref{thm:help_Schur} for the matrix $|F^{|j|}| = (|F_{m,k}^{|j|}|)_{m,k=1}^\infty = (-F_{m,k}^{|j|})_{m,k=1}^\infty$ with the constants $C_1 = 4$ and $C_2 = 32$. 
        
        Let us start with the first inequality in \eqref{eq:schur}. 
        Recalling that $F^\abs{j}$ is lower triangular and resorting to the upper bound \eqref{eq:asymp}, we can estimate as follows:
             \begin{align} \label{eq:schur_first}
        \sum_{k = 1}^\infty \big| F^\abs{j}_{m, k} \big| u_k 
             &\leq \frac{1}{\sqrt{\pi}} \, \sum^m_{k = 1} \frac{\sqrt{2 k + \abs{j} - 1}}{m + \abs{j}} \, {\rm e}^{-\frac{2(k + |j|) (k - 1)}{2 m + \abs{j} + 1 }} \frac{1}{\sqrt{k}} \nonumber \\[1mm]
        &\leq \sqrt{\frac{2}{\pi}} \frac{1}{m + \abs{j}}\, \sum^m_{k = 1} \sqrt{\frac{k + \abs{j}}{k}} \, {\rm e}^{-\frac{2(k + |j|) (k - 1)}{2 m + \abs{j} + 1 }} \nonumber \\[1mm]
             &\leq \sqrt{\frac{2}{\pi}} \frac{1}{m + \abs{j}} \left( \int_{1}^m  \sqrt{\frac{x + \abs{j}}{x}} \, \e^{-\frac{2(x + \abs{j})(x - 1)}{2m + \abs{j} + 1 }} \di x  + \sqrt{\abs{j} + 1}  \right),
        \end{align}
        where the final step corresponds to the integral test to bound the sum over the index \( k \), with $\sqrt{\abs{j} + 1}$ being the value of the summand at $k=1$. Note that the integral test can be employed in its basic form since the summand is monotonically decreasing in $k$, as can be easily checked via differentiation. 
        
        Let us concentrate on the integral term on the right-hand side of \eqref{eq:schur_first}. Denote $a = 2m + \abs{j} + 1$ and make the change of variables 
        $$
        t^2 = \frac{2}{a} (x + \abs{j})(x - 1), \qquad \di x  = \frac{\sqrt{2  a  (x + |j|) (x-1)}}{2 x + \abs{j} - 1} \, \di t \leq 2 \, \sqrt{\frac{a \, x}{x + \abs{j}}} \, \di t
        $$ 
        for all $x \geq 1$, as can be verified through a straightforward calculation.
        This leads to 
        \begin{equation} \label{eq:errorf}
            \int^m_1 \sqrt{\frac{x + \abs{j}}{x}} \e^{-\frac{2 (x + \abs{j})(x - 1)}{a}} \, \di x
            \leq 2 \sqrt{a} \int^{\sqrt{\frac{2 (m + \abs{j} (m - 1))}{a}}}_0 \e^{-t^2} \, \di t
            \leq \sqrt{\pi a},
        \end{equation}
        where the last step follows by integrating up to infinity. 
        Combining \eqref{eq:schur_first} and \eqref{eq:errorf} finally gives
        \begin{equation}
        \label{eq:non_sharp1}
            \sum_{k = 1}^\infty \big| F^\abs{j}_{m, k} \big| u_k
            \leq \frac{\sqrt{2}}{m + \abs{j}} \left( \sqrt{2m + \abs{j} + 1} + \sqrt{\frac{\abs{j} + 1}{\pi}} \right)
            < 4 \, \frac{1}{\sqrt{m + \abs{j}}} = 4 \, v_m
        \end{equation}
        for all $m \in \N$ and $j \in \Z$. This proves the first part of \eqref{eq:schur}.

        We prove the second inequality in \eqref{eq:schur} in a similar manner, that is, we recall that $F^{|j|}$ is lower triangular, use the upper bound \eqref{eq:asymp}, and subsequently approximate the sum with an integral:
        \begin{align} \label{eq:schur_second}
            \sum^\infty_{m = 1} \big| F^\abs{j}_{m, k} \big| v_m 
                 &\leq  \frac{1}{\sqrt{\pi}} \sum^\infty_{m = k} \frac{\sqrt{2 k + \abs{j} - 1}}{m + \abs{j}} \, {\rm e}^{-\frac{2(k + |j|) (k - 1)}{2 m + \abs{j} + 1 }} \frac{1}{\sqrt{m + |j|}}
            \nonumber \\[1mm]
            &= \frac{1}{\sqrt{\pi}} \sqrt{2 k + \abs{j} - 1} \, \sum^\infty_{m = k} \left( \frac{1}{m + \abs{j}} \right)^{\frac{3}{2}} \e^{-\frac{2 (k + \abs{j})(k - 1)}{2 m + \abs{j} + 1}} \nonumber \\[1mm]
            &\leq \frac{3^{\frac{3}{2}} {\rm e}^2}{\sqrt{\pi}} \sqrt{2 k + \abs{j} - 1} \, \sum^\infty_{m = k} \left( \frac{1}{2m + \abs{j} + 1} \right)^{\frac{3}{2}} \e^{-\frac{2 k (k + \abs{j})}{2m + \abs{j} + 1}} \nonumber \\[1mm]
            &\leq \frac{3^{\frac{3}{2}} {\rm e}^2}{\sqrt{\pi}} \sqrt{2 k + \abs{j} - 1} \left( \int^\infty_k \left( \frac{1}{2x + \abs{j} + 1} \right)^{\frac{3}{2}} \e^{-\frac{b}{2x + \abs{j} + 1}} \, \di x + \left( \frac{3}{2  \e  b} \right)^{\frac{3}{2}} \right),
        \end{align}
        where $b = 2 k (k + \abs{j})$. The last inequality is obtained by observing that the summand is increasing as a function of $m$ on $(0, m^*)$ and decreasing on $(m^*, \infty)$, where $m^* = m^*(k, |j|)$ is the unique critical point of the summand, characterized by
        \begin{equation*}
            \frac{3}{2} (2 m^* + \abs{j} + 1) = b.
        \end{equation*}
        The additional term on the right-hand side of \eqref{eq:schur_second} is the maximal value of the summand, attained at $m^*$, the inclusion of which ensures the validity of the upper bound provided by the integral test.
        
        Making the change of variables 
        $$
        t^2 = \frac{b}{2 x + \abs{j} + 1}, \qquad \di x = - \frac{b}{t^3} \, \di t
        $$
        yields
        \begin{equation} \label{eq:schur_third}
            \int_{k}^{\infty} \left( \frac{1}{2x + \abs{j} + 1} \right)^{\frac{3}{2}} \e^{-\frac{b}{2x + \abs{j} + 1}} \, \di x 
            = \sqrt{\frac{1}{b}} \int_0^{\sqrt{\frac{b}{2k + \abs{j} + 1}}} \e^{-t^2} \, \di t
            \leq \frac{1}{2}\sqrt{\frac{\pi}{b}}, 
        \end{equation}
        where the inequality corresponds to integrating up to infinity.
        Substituting \eqref{eq:schur_third} in \eqref{eq:schur_second} and expanding \( b =  2 k (k + \abs{j})\), we get
        \begin{align}
        \label{eq:non_sharp2}
            \sum^\infty_{m = 1} \big| F^\abs{j}_{m, k} \big| v_m &\leq \frac{3^{\frac{3}{2}} \e^2}{\sqrt{\pi}} \sqrt{2 k + \abs{j} - 1} \left( \frac{1}{2} \sqrt{\frac{\pi}{2 k (k + \abs{j})}} + \left( \frac{3}{4 {\rm e} k (k + \abs{j})} \right)^{\frac{3}{2}} \right) \nonumber \\[1mm]
            &= \frac{3^\frac{3}{2} \e^2}{2} \sqrt{\frac{2k + \abs{j} - 1}{2 k (k + \abs{j})}} + \sqrt{\frac{\e}{\pi}} \left( \frac{9}{2} \right)^\frac{3}{2} \sqrt{\frac{2k + \abs{j} - 1}{(2 k (k + \abs{j}))^3}} \nonumber \\[1mm]
            &\leq \left( \frac{3^\frac{3}{2} \e^2}{2} + \sqrt{\frac{\e}{\pi}} \left( \frac{9}{2} \right)^\frac{3}{2} \right) \frac{1}{\sqrt{k}} \leq 32 \, u_k,
        \end{align}
        for all $k \in \N$ and $j \in \Z$. This proves the second part of \eqref{eq:schur}.

        The infinite matrix $|F^{|j|}|$ thus satisfies the conditions \eqref{eq:schur} in Theorem~\ref{thm:help_Schur} with $C_1 = 4$ and $C_2 = 32$. Consequently,
        \begin{equation*}
            \big\| F^\abs{j} \big\|_{\mathscr{L}(\ell^2(\N))}  = \big\| \, | F^\abs{j} | \,  \big\|_{\mathscr{L}(\ell^2(\N))} \leq \sqrt{C_1 C_2} = 2^\frac{7}{2} \qquad \forall j\in \Z,
        \end{equation*}
        which completes the proof.
    \end{proof}

\begin{remark}
The constant on the right-hand side of \eqref{eq:F_norm} is not optimal as,~e.g.,~the estimates \eqref{eq:non_sharp1} and \eqref{eq:non_sharp2} in the proof of Lemma~\ref{Lemma_F_L2} could be slightly sharpened. However, we consider presenting such a non-optimized bound well-motivated because, combined with Lemma~\ref{lemma:norms}, it reveals the approximate magnitude of the norm of $F: L^2(D) \to \mathscr{L}_{\rm HS}(L^2_\diamond(\partial D))$.
\end{remark}

\section{Generalization for \texorpdfstring{\( C^{1, \alpha} \)}{C1,a} domains and the proof of Corollary~\ref{cor:1}}
\label{sec:generalization}

Let $\Omega \subset \R^2$ be a simply-connected $C^{1, \alpha}$ domain and consider a conductivity perturbation $\eta \in L^2(\Omega)$. As in \cite[Section~2]{2d_paperi}, let $\Phi: D \to \Omega$ be a Riemann mapping, denote it inverse by $\Psi$, and define $\widetilde{\eta} = \eta \circ \Phi$. Due to the Kellogg--Warschawski theorem (see, e.g., \cite[Theorem 3.6 \& Exercise 3.3.5]{Pommerenke1992}), both $\Phi$ and $\Psi$ have extensions, with H\"older-continuous and non-vanishing complex derivatives $\Phi'$ and $\Psi'$,  to the closures of their respective domains. Based on \cite[eqs.~(2.2) \& (2.4)]{2d_paperi} and Lemma~\ref{lemma:norms} and \ref{Lemma_F_L2}, we have
\begin{align*}
\label{eq:proof_of_main}
\| F \eta \|_{\mathscr{L}_{\rm HS}( L^2_\diamond(\partial \Omega ))} &\leq \|\Phi' \|_{L^\infty(\partial D)} \| F \widetilde{\eta} \|_{\mathscr{L}_{\rm HS}( L^2_\diamond(\partial D))} \\[1mm]
& \leq \|\Phi' \|_{L^\infty(\partial D)} \| F \|_{\mathscr{L}(L^2(D),\mathscr{L}_{\rm HS}(L^2_\diamond(\partial D)))} \| \widetilde{\eta} \|_{L^2(D)} \\[1mm]
& \leq  16 \, \|\Psi' \|_{L^\infty(\partial \Omega)} \|\Phi' \|_{L^\infty( \partial D)}  \| \eta \|_{L^2(\Omega)},
\end{align*}
where we abused the notation by denoting the linearized forward map at the unit conductivity by $F$ for both $\Omega$ and $D$. Theorem~\ref{thm:main} has now been proved in its full extent. 

Let us then complete this paper by proving Corollary~\ref{cor:1}. The assertion follows immediately if one shows that the embedding 
\[
\mathcal{I}_\varepsilon: \mathscr{L}_{\rm HS}( L^2_\diamond(\partial \Omega)) \hookrightarrow\mathscr{L}_{\rm HS}( H^{\varepsilon}_\diamond(\partial \Omega), H^{-\varepsilon}_\diamond(\partial \Omega))
\]
is compact for any (small enough) $\varepsilon > 0$. In fact, we will prove this result for any simply-connected {\em Lipschitz domain} $\Omega$. 

To this end, let $\{ \phi_i \}_{i \in \N} \subset H^{1/2}_{\diamond}(\partial \Omega)$ be eigenfunctions of the compact self-adjoint operator $\Lambda(1): L^2_\diamond(\partial \Omega) \to L^2_\diamond(\partial \Omega)$ forming an orthonormal basis for $L^2_\diamond(\partial \Omega)$, and let $\{\lambda_i \}_{i \in \N} \subset \R_+$ be the corresponding eigenvalues that converge monotonically to zero as $i$ tends to infinity. The claimed smoothness of the eigenfunctions is a consequence of $\Lambda(1): H^{-1/2}_\diamond(\partial \Omega) \to H^{1/2}_\diamond(\partial \Omega)$ being a positive isomorphism; see,~e.g.,~\cite{Hyvonen18}. Recall that $\langle \, \cdot \, , \, \cdot \, \rangle_\varepsilon: H^\varepsilon(\partial \Omega) \times H^{-\varepsilon}(\partial \Omega) \to \C$ denotes the  sesquilinear dual evaluation between $H^\varepsilon(\partial \Omega)$ and $H^{-\varepsilon}(\partial \Omega)$. It follows from a simple extension of \cite[Lemma~1]{Hyvonen18} that
\begin{equation}
\label{eq:s-inner}
\langle g, h \rangle_{H^\varepsilon(\partial \Omega)} = \sum_{i \in \N} \lambda_i^{-2 \varepsilon} \langle g, \phi_i \rangle_\varepsilon \overline {\langle h, \phi_i \rangle_\varepsilon}, \qquad \varepsilon \in [-\tfrac{1}{2}, \tfrac{1}{2}],
\end{equation}
defines an inner product for $H^\varepsilon_\diamond(\partial \Omega)$, compatible with the standard topology of $H^\varepsilon_\diamond(\partial \Omega)$. A direct calculation verifies that the scaled eigenfunctions
\begin{equation}
\label{eq:s-basis}
\phi_i^\varepsilon = \lambda_i^\varepsilon \phi_i, \qquad i \in \N,
\end{equation}
form an orthonormal basis for $H^\varepsilon_\diamond(\partial \Omega)$, $\varepsilon\in [-\tfrac{1}{2}, \tfrac{1}{2}]$,  with respect to the inner product \eqref{eq:s-inner}. See also \cite[Appendix~B]{Garde2020}.

 Accordingly, an inner product for the Hilbert space $\mathscr{L}_{\rm HS}( H^{\varepsilon}_\diamond(\partial \Omega), H^{-\varepsilon}_\diamond(\partial \Omega))$ can be defined by
\[
\langle T_1, T_2 \rangle_{\mathscr{L}_{\rm HS}( H^{\varepsilon}_\diamond(\partial \Omega), H^{-\varepsilon}_\diamond(\partial \Omega))} = \sum_{p \in \N} \big \langle T_1 \phi_p^\varepsilon, T_2 \phi_p^\varepsilon \big \rangle_{H^{-\varepsilon}(\partial \Omega)}, \qquad \varepsilon  \in [-\tfrac{1}{2}, \tfrac{1}{2}],   
\]
for linear operators $T_1, T_2: H^{\varepsilon}_\diamond(\partial \Omega) \to H^{-\varepsilon}_\diamond(\partial \Omega)$ \cite{Weidmann1980}. Hence, it follows from \eqref{eq:s-inner} and \eqref{eq:s-basis} that the rank-one operators $\{ \phi_{i,j}^{\varepsilon, \otimes} \}_{i,j \in \N}$, defined via
\[
\phi_{i,j}^{\varepsilon, \otimes} : g \mapsto (\lambda_i \lambda_j)^{-\varepsilon}  \langle g, \phi_j \rangle_{\varepsilon} \, \phi_i,
\]
form an orthonormal basis of  $\mathscr{L}_{\rm HS}( H^{\varepsilon}_\diamond(\partial \Omega), H^{-\varepsilon}_\diamond(\partial \Omega))$ for any $\varepsilon \in [-\tfrac{1}{2}, \tfrac{1}{2}]$.

\begin{proposition}
    Let $\Omega \subset \R^2$ be a simply-connected Lipschitz domain. The embedding $\mathcal{I}_\varepsilon: \mathscr{L}_{\rm HS}( L^2_\diamond(\partial  \Omega)) \hookrightarrow\mathscr{L}_{\rm HS}( H^{\varepsilon}_\diamond(\partial \Omega), H^{-\varepsilon}_\diamond(\partial \Omega))$ is compact for any $\varepsilon \in (0, \tfrac{1}{2}]$.
\end{proposition}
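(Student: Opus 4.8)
The plan is to show that $\mathcal{I}_\varepsilon$ is simultaneously diagonalized by the two orthonormal bases already constructed in the excerpt, with diagonal multipliers that tend to zero; compactness is then immediate. The whole argument hinges only on $\lambda_i \to 0$, which explains why the statement is valid for an arbitrary simply-connected Lipschitz domain.

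First I would fix $T \in \mathscr{L}_{\rm HS}(L^2_\diamond(\partial\Omega))$ and record its matrix $T_{ij} = \langle T\phi_j, \phi_i\rangle$ in the orthonormal basis $\{\phi_i\}$ of $L^2_\diamond(\partial\Omega)$, so that $\|T\|^2_{\mathscr{L}_{\rm HS}(L^2_\diamond(\partial\Omega))} = \sum_{i,j}|T_{ij}|^2$. The aim is to compute the matrix of $\mathcal{I}_\varepsilon T$ in the orthonormal basis $\{\phi_{i,j}^{\varepsilon,\otimes}\}$ of the target space. Using the defining inner product of $\mathscr{L}_{\rm HS}(H^{\varepsilon}_\diamond(\partial\Omega), H^{-\varepsilon}_\diamond(\partial\Omega))$ together with the identity $\phi_{i,j}^{\varepsilon,\otimes}\phi_p^\varepsilon = \lambda_i^{-\varepsilon}\delta_{pj}\phi_i$, which follows from \eqref{eq:s-basis} and from the fact that the dual evaluation $\langle\cdot,\cdot\rangle_\varepsilon$ restricts to the $L^2(\partial\Omega)$ inner product on the smooth eigenfunctions (so $\langle\phi_p,\phi_j\rangle_\varepsilon = \delta_{pj}$), I can isolate a single contributing term.

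Since $\mathcal{I}_\varepsilon$ leaves the operator unchanged as a map and merely re-interprets its domain and codomain, I would next evaluate $\langle\mathcal{I}_\varepsilon T, \phi_{i,j}^{\varepsilon,\otimes}\rangle = \langle T\phi_j^\varepsilon, \lambda_i^{-\varepsilon}\phi_i\rangle_{H^{-\varepsilon}(\partial\Omega)}$, using $T\phi_j^\varepsilon = \lambda_j^\varepsilon T\phi_j$. The $H^{-\varepsilon}$ inner product is computed from \eqref{eq:s-inner} with $\varepsilon$ replaced by $-\varepsilon$; because $T\phi_j \in L^2_\diamond(\partial\Omega)$ the relevant dual evaluations again collapse to $L^2$ inner products, giving $\langle T\phi_j,\phi_i\rangle_{H^{-\varepsilon}(\partial\Omega)} = \lambda_i^{2\varepsilon}T_{ij}$. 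Combining the powers of $\lambda_i$ and $\lambda_j$ produces the clean diagonal relation $\langle\mathcal{I}_\varepsilon T, \phi_{i,j}^{\varepsilon,\otimes}\rangle = (\lambda_i\lambda_j)^\varepsilon T_{ij}$, whence $\|\mathcal{I}_\varepsilon T\|^2_{\mathscr{L}_{\rm HS}(H^{\varepsilon}_\diamond(\partial\Omega),H^{-\varepsilon}_\diamond(\partial\Omega))} = \sum_{i,j}(\lambda_i\lambda_j)^{2\varepsilon}|T_{ij}|^2$. Through the coordinate isometries $T\mapsto (T_{ij})$ and $S\mapsto(\langle S,\phi_{i,j}^{\varepsilon,\otimes}\rangle)$ onto $\ell^2(\N\times\N)$, this exhibits $\mathcal{I}_\varepsilon$ as unitarily equivalent to multiplication by the array $(\lambda_i\lambda_j)^\varepsilon$. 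I would then approximate $\mathcal{I}_\varepsilon$ by the finite-rank truncations retaining only indices $i,j \le N$ and bound the operator norm of the remainder by $\sup_{i>N \text{ or } j>N}(\lambda_i\lambda_j)^\varepsilon \le (\lambda_1\lambda_{N+1})^\varepsilon$, which tends to zero as $N\to\infty$ since $\lambda_i\to0$ monotonically and $\varepsilon > 0$; being an operator-norm limit of finite-rank maps, $\mathcal{I}_\varepsilon$ is compact.

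I expect the only genuine subtlety to be the bookkeeping in the two inner-product computations — in particular, justifying that the sesquilinear dual evaluations $\langle\cdot,\cdot\rangle_{\pm\varepsilon}$ reduce to the $L^2(\partial\Omega)$ inner product whenever one argument is a smooth eigenfunction and the other lies in $L^2_\diamond(\partial\Omega)$, and tracking the powers of $\lambda_i,\lambda_j$ introduced by the rescaled bases \eqref{eq:s-basis} and by \eqref{eq:s-inner} for both $+\varepsilon$ and $-\varepsilon$. Once the multiplier form $(\lambda_i\lambda_j)^\varepsilon T_{ij}$ is established, the compactness conclusion is entirely routine, the essential ingredient being merely $\lambda_i\to0$.
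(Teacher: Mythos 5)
Your proposal is correct and follows essentially the same route as the paper: both reduce the dual evaluations to $L^2(\partial\Omega)$ inner products on the eigenbasis $\{\phi_i\}$, arrive at the identity $\langle \mathcal{I}_\varepsilon T, \phi_{i,j}^{\varepsilon,\otimes}\rangle = (\lambda_i\lambda_j)^\varepsilon \langle T\phi_j,\phi_i\rangle_{L^2(\partial\Omega)}$, and bound the tail of the resulting diagonal multiplier by $(\lambda_1\lambda_{M+1})^\varepsilon \to 0$ to exhibit $\mathcal{I}_\varepsilon$ as an operator-norm limit of finite-rank truncations. The only difference is cosmetic: you phrase the computation as a unitary equivalence with a multiplication operator on $\ell^2(\N\times\N)$, while the paper estimates $\|(\mathcal{I}_\varepsilon-\mathcal{I}_\varepsilon^M)T\|$ directly.
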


\begin{proof}
     Let $\mathbb{M} = \{1, 2, \dots, M\}$. We introduce a sequence of finite-rank operators $\{ \mathcal{I}_\varepsilon^M \}_{M\in \N}$ via 
    \[
    \mathcal{I}_\varepsilon^M T = \sum_{(i,j) \in \mathbb{M}^2} \big\langle  T,  \phi_{i,j}^{\varepsilon, \otimes} \big\rangle_{\mathscr{L}_{\rm HS}( H^{\varepsilon}_\diamond(\partial \Omega), H^{-\varepsilon}_\diamond(\partial \Omega))} \, \phi_{i,j}^{\varepsilon, \otimes}, \qquad T \in \mathscr{L}_{\rm HS}( L^2_\diamond(\partial \Omega)),
    \]
     and demonstrate that it converges to $\mathcal{I}_\varepsilon$ in the operator norm as $M$ goes to infinity. This proves the assertion since the subspace of compact operators is closed in the operator topology for the bounded linear operators between the Banach spaces $\mathscr{L}_{\rm HS}( L^2_\diamond(\partial  \Omega))$ and $\mathscr{L}_{\rm HS}( H^{\varepsilon}_\diamond(\partial \Omega), H^{-\varepsilon}_\diamond(\partial \Omega))$.
     
      Define $\N_M^2  = \N^2 \setminus \mathbb{M}^2$. Because $\{  \phi_{i,j}^{\varepsilon, \otimes}\}_{i,j \in \N}$ is an orthonormal basis for $\mathscr{L}_{\rm HS}( H^{\varepsilon}_\diamond(\partial \Omega), H^{-\varepsilon}_\diamond(\partial \Omega))$, we have
    \begin{align*}
    \big\| (\mathcal{I}_\varepsilon - \mathcal{I}_\varepsilon^M) T \big\|_{\mathscr{L}_{\rm HS}( H^{\varepsilon}_\diamond(\partial \Omega), H^{-\varepsilon}_\diamond(\partial \Omega))}^2 &= \sum_{(i,j) \in \N_M^2} \big| \big\langle T,   \phi_{i,j}^{\varepsilon, \otimes} \big\rangle_{\mathscr{L}_{\rm HS}( H^{\varepsilon}_\diamond(\partial \Omega), H^{-\varepsilon}_\diamond(\partial \Omega))} \big|^2 \\[1mm]
    &= \sum_{(i,j) \in \N_M^2} (\lambda_i \lambda_j)^{-2\varepsilon} \,  \big| \big\langle T,   \phi_{i,j}^{0, \otimes} \big\rangle_{\mathscr{L}_{\rm HS}( H^{\varepsilon}_\diamond(\partial \Omega), H^{-\varepsilon}_\diamond(\partial \Omega))} \big|^2 \\[0mm]
    &=\sum_{(i,j) \in \N_M^2}  (\lambda_i \lambda_j)^{-2\varepsilon} \, \bigg| \sum_{p\in \N} \big \langle T \phi_p^\varepsilon, \phi_{i,j}^{0, \otimes} \phi_p^\varepsilon \big \rangle_{H^{-\varepsilon}(\partial \Omega)} \bigg|^2 \\[1mm]
    & = \sum_{(i,j) \in \N_M^2} \lambda_i^{-2\varepsilon} \, 
    \big| \big\langle T \phi_{j}^\varepsilon,  \phi_{i} \big \rangle_{H^{-\varepsilon}(\partial \Omega)} \big|^2 \\[1mm]
    & = \sum_{(i,j) \in \N_M^2} \lambda_i^{-2\varepsilon} \lambda_j^{2\varepsilon} \, \big| \big\langle T \phi_j,  \phi_{i} \big \rangle_{H^{-\varepsilon}(\partial \Omega)} \big|^2 \\[1mm]
     & = \sum_{(i,j) \in \N_M^2} (\lambda_i \lambda_j)^{2\varepsilon} \, \big| \big\langle T \phi_{j},  \phi_{i} \big \rangle_{L^{2}(\partial \Omega)}\big|^2 \\[1mm]
     & \leq \, (\lambda_1 \lambda_{M+1})^{2 \varepsilon}  \sum_{(i,j) \in \N_M^2} \big| \big\langle T \phi_{j},  \phi_{i} \big \rangle_{L^{2}(\partial \Omega)}\big|^2 \\[1mm] 
     & \leq  (\lambda_1 \lambda_{M+1})^{2 \varepsilon} \, \| T \|_{\mathscr{L}_{\rm HS}( L^2_\diamond(\partial \Omega))}^2.
    \end{align*}
    Since $T \in \mathscr{L}_{\rm HS}( L^2_\diamond(\partial \Omega))$ is arbitrary and $\lambda_{M+1} \to 0$ as $M \to \infty$, we conclude that $\mathcal{I}_\varepsilon^M$ converges to $\mathcal{I}_\varepsilon$ in the topology of
    \[
    \mathscr{L}\big(\mathscr{L}_{\rm HS}( L^2_\diamond(\partial \Omega)), \mathscr{L}_{\rm HS}( H^{\varepsilon}_\diamond(\partial \Omega), H^{-\varepsilon}_\diamond(\partial \Omega))\big), \qquad \varepsilon \in (0, \tfrac{1}{2}],
    \]
    as $M \to \infty$. This completes the proof.
\end{proof}

\subsection*{Acknowledgments}
This work was supported by the Academy of Finland (decisions 353081 and 358944).

\phantomsection
\bibliographystyle{plain}
\bibliography{sources.bib}

\begin{thebibliography}{10}

\bibitem{Adler09}
A.~Adler, J.~H. Arnold, R.~Bayford, A.~Borsic, B.~Brown, P.~Dixon, T.~J.~C.
  Faes, I.~Frerichs, H.~Gagnon, Y.~G\"arber, B.~Grychtol, G.~Hahn, W.~R.~B.
  Lionheart, A.~Malik, R.~P. Patterson, J.~Stocks, A.~Tizzard, N.~Weiler, and
  G.~K. Wolf.
\newblock {GREIT}: a unified approach to {2D} linear {EIT} reconstruction of
  lung images.
\newblock {\em Physiol. Meas.}, 30(6):S35, 2009.

\bibitem{Autio2024}
A.~Autio, H.~Garde, M.~Hirvensalo, and N.~Hyv\"onen.
\newblock Linearization-based direct reconstruction for {EIT} using triangular
  zernike decompositions.
\newblock 2024.
\newblock arXiv:2403.03320.

\bibitem{Borcea02}
L.~Borcea.
\newblock Electrical impedance tomography.
\newblock {\em Inverse problems}, 18:R99--R136, 2002.

\bibitem{Cheney90}
M.~Cheney, D.~Isaacson, J.~C. Newell, S.~Simske, and J.~Goble.
\newblock {NOSER}: An algorithm for solving the inverse conductivity problem.
\newblock {\em Int. J. Imag. Syst. Tech.}, 2(2):66--75, 1990.

\bibitem{Cheney99}
M.~Cheney, D.~Isaacson, and J.C. Newell.
\newblock Electrical impedance tomography.
\newblock {\em SIAM Rev.}, 41:85--101, 1999.

\bibitem{Engl96}
H.~Engl, M.~Hanke, and A.~Neubauer.
\newblock {\em Regularization of inverse problems}.
\newblock Kluwer, 1996.

\bibitem{Garde2022}
H.~Garde and N.~Hyv\"onen.
\newblock Series reversion in {C}alder{\'o}n's problem.
\newblock {\em Math. Comp.}, 91(336):1925--1953, 2022.

\bibitem{Garde2020}
H.~Garde, N.~Hyv\"onen, and T.~Kuutela.
\newblock On regularity of the logarithmic forward map of electrical impedance
  tomography.
\newblock {\em SIAM J. Math. Anal.}, 52:197--220, 2020.

\bibitem{2d_paperi}
H.~Garde and N.~Hyvönen.
\newblock Linearised {C}alder\'on problem: {R}econstruction and {L}ipschitz
  stability for infinite-dimensional spaces of unbounded perturbations.
\newblock {\em SIAM J. Math. Anal.}, 56:3588--3604, 2024.

\bibitem{Guo2013}
B.-N. Guo and F.~Qi.
\newblock Refinements of lower bounds for polygamma functions.
\newblock {\em Proc. Amer. Math. Soc.}, 141:1007--1015, 2013.

\bibitem{Hanke03}
M.~Hanke and M.~Brühl.
\newblock Recent progress in electrical impedance tomography.
\newblock {\em Inverse Problems}, 19(6):S65--S90, 2003.

\bibitem{Hyvonen18}
N.~Hyv{\"o}nen and L.~Mustonen.
\newblock Generalized linearization techniques in electrical impedance
  tomography.
\newblock {\em Numer. Math.}, 140:95--120, 2018.

\bibitem{Kaipio04a}
J.~P. Kaipio and E.~Somersalo.
\newblock {\em Statistical and Computational Inverse Problems}.
\newblock Springer--Verlag, 2004.

\bibitem{Lechleiter08}
A.~Lechleiter and A.~Rieder.
\newblock Newton regularizations for impedance tomography: convergence by local
  injectivity.
\newblock {\em Inverse Problems}, 24(6):065009, 2008.

\bibitem{Pommerenke1992}
Ch. Pommerenke.
\newblock {\em Boundary behaviour of conformal maps}, volume 299 of {\em
  Grundlehren der Mathematischen Wissenschaften}.
\newblock Springer-Verlag, Berlin, 1992.

\bibitem{Schur1911}
J.~Schur.
\newblock Bemerkungen zur {T}heorie der beschränkten {B}ilinearformen mit
  unendlich vielen {V}eränderlichen.
\newblock {\em Journal für die Reine und Angewandte Mathematik}, 140:1--28,
  1911.

\bibitem{Stuart10}
A.~M. Stuart.
\newblock Inverse problems: A {B}ayesian perspective.
\newblock {\em Acta Numer.}, 19:451–559, 2010.

\bibitem{Uhlmann09}
G.~Uhlmann.
\newblock Electrical impedance tomography and {C}alder{\'o}n's problem.
\newblock {\em Inverse Problems}, 25:123011, 2009.

\bibitem{Weidmann1980}
J.~Weidmann.
\newblock {\em Linear operators in {H}ilbert spaces}, volume~68 of {\em
  Graduate Texts in Mathematics}.
\newblock Springer-Verlag, New York-Berlin, 1980.

\bibitem{Zernike1934}
F.~Zernike.
\newblock Beugungstheorie des {S}chneidenverfahrens und seiner verbesserten
  {F}orm, der {P}hasenkontrastmethode.
\newblock {\em Physica}, 1:689--704, 1934.

\end{thebibliography}

\end{document}